\newcommand{\Ann}{\operatorname{Ann}}
\newcommand{\Sym}{\operatorname{Sym}}
\newcommand{\Z}{{\mathbb Z}}
\theoremstyle{plain}
\newtheorem{theorem}{Theorem}[section]
\newtheorem{corollary}[theorem]{Corollary}
\newtheorem{lemma}[theorem]{Lemma}
\theoremstyle{definition}
\newtheorem{definition}[theorem]{Definition}
\newtheorem{remark}[theorem]{Remark}
\newtheorem{example}[theorem]{Example}
\newtheorem{question}[theorem]{Question}
\newtheorem*{remark*}{Remark}
\newtheorem*{ack}{Acknowledgment}
\numberwithin{equation}{section}
\numberwithin{table}{section}
\newtheorem*{problem*}{Problem}
\newtheorem*{question*}{Question}
\newtheorem*{questionrephrased*}{Question 1.2 Rephrased}
\newtheorem*{example*}{Example}
\newtheorem*{watthm*}{Watanabe's Theorem}
\newtheorem*{remarks*}{Remarks}
\newtheorem*{claim*}{Claim}
\newtheorem*{proposition*}{Proposition}
\newtheorem*{lemma*}{Lemma}
\newtheorem*{conjecture*}{Conjecture}
\newtheorem*{JBC*}{Watanabe's Bold Conjecture}
\newtheorem*{UPTP*}{Universal Property of Tensor Products}
\newtheorem*{problem1*}{Problem 1}
\newtheorem*{problem2*}{Problem 2}
\newtheorem*{fact*}{Fact}
\newtheorem*{F1*}{Fact 1}
\newtheorem*{F2*}{Fact 2}
\newtheorem*{F3*}{Fact 3}
\definecolor{purple}{rgb}{0.4,0.2,0.4}
\def\cha{\mathrm{char}\ }
\def\Hom{\mathrm{Hom}}
\def\Hilb{\mathrm{Hilb}}
\def\<{\left<}
\def\>{\right>}
\def\Gl{\mathrm{Gl}}
\def\F{{\sf{k}}}
\def\Sym{\mathrm{Sym}}
\def\ns{\footnotesize \it}
\newcommand{\kk}{{\sf k}}
\def\cha{\mathrm{char}\ }
\begin{document}
\title{Artinian Gorenstein algebras that are free extensions over ${\sf k}[t]/(t^n)$, and Macaulay duality\footnote{\textbf{Keywords}: Artinian algebra, free extension, Gorenstein algebra, Hilbert function, invariant, Lefschetz property, tensor product. \textbf{2010 Mathematics Subject Classification}: Primary: 13E10;  Secondary: 13A50, 13D40, 13H10, 14D06}}

\author{
Anthony Iarrobino\\[.05in]
{\ns Department of Mathematics, Northeastern University, Boston, MA 02115,
 USA.}\\[.2in]
 Pedro Macias Marques\\[.05in]
{\ns Departamento de Matem\'{a}tica, Escola de Ci\^{e}ncias e Tecnologia, Centro de Investiga\c{c}\~{a}o}\\[-.05in]
{\ns  em Matem\'{a}tica e Aplica\c{c}\~{o}es, Instituto de Investiga\c{c}\~{a}o e Forma\c{c}\~{a}o Avan\c{c}ada,}\\[-.05in]
{\ns Universidade de \'{E}vora, Rua Rom\~{a}o Ramalho, 59, P--7000--671 \'{E}vora, Portugal.}\\[.2in]
Chris McDaniel\\[0.05in]
{\ns Endicott College, 376 Hale St
Beverly, MA 01915, USA.}\\[.2in]}

\date{July 18, 2018, revised August 9, 2019}
\maketitle

\begin{abstract}
T. Harima and J. Watanabe studied the Lefschetz properties of free extension Artinian algebras $C$ over a base $A$ with fiber $B$. The free extensions are deformations of the usual tensor product; when $C$ is also Gorenstein, so are $A$ and $B$, and it is natural to ask for the relation among the Macaulay dual generators for the algebras.  Writing a dual generator $F$ for $C$ as a homogeneous ``polynomial'' in $T$ and the dual variables for $B$, and given the dual generator for $B$, we give sufficient conditions on $F$ that ensure that $C$ is a free extension of $A={\sf k}[t]/(t^n)$ with fiber $B$. We give examples exploring the sharpness of the statements. We also consider a special set of coinvariant algebras $C$ which are free extensions of $A$, but which do not satisfy the sufficient conditions of our main result.
\end{abstract}
\section{Introduction}
Let ${\F}$ be an arbitrary field, and let $A$ be a commutative $\Z_{\geq 0}$-graded connected Artinian algebra, which we shall simply call a graded Artinian algebra for short.  We say that $A$ has the \emph{standard grading} if it has algebra generators in degree one, i.e. if $A=\F[A_1]$; otherwise it is non-standard graded.  The unique (homogeneous) maximal ideal is $\mathfrak{m}_A=\oplus_{i=1}^jA_i$, and the socle of $A$ is the colon ideal $(0:\mathfrak{m}_A)$. By the \emph{highest socle degree} we shall mean the largest integer $j\geq 0$ for which the graded component $A_j$ is non-zero.  We say that $A$ is a \emph{graded Artinian Gorenstein algebra} if its socle has $\F$-dimension equal to one; in this case $j$ is simply the \emph{socle degree} and we have the equality $A_j=(0:\mathfrak{m}_A)$, and in particular $A_j$ has $\F$-dimension one. 
Our convention is that all
Artinian algebras in this article are graded, possibly non-standard graded, unless  we specify otherwise; also, all maps between graded objects preserve the grading.

The notion of free extension generalizes that of a tensor product, and was introduced by T. Harima and J. Watanabe in \cite{HW1,HW2} to study the strong Lefschetz property of Artinian algebras. In \cite[Theorem 6.1]{HW3} they simplify their proof of a main earlier result: namely, they show that if $A,B,C$ are graded Artinian algebras of symmetric Hilbert functions, and if $C$ is a free extension of $A$ with fiber $B$, then both $A$ and $B$ are strong Lefschetz implies that $C$ is strong Lefschetz (see Remark \ref{Lefrem} below).\par 
Let $A$, $B$, and $C$ be graded (not necessarily standard) Artinian algebras, with maps $\iota\colon A\rightarrow C$ and $\pi\colon C\rightarrow B$. 
\begin{definition} \cite[\S4.2-4.4]{H-W}.
	\label{def:FreeExt}
	The Artinian algebra $C$ is a \emph{free extension} with base $A$ and fiber $B$ if both
	\begin{enumerate}[i.]
		\item The inclusion $\iota\colon A\rightarrow C$ makes $C$ into a free $A$-module.
		\item The projection $\pi\colon C\rightarrow B$ is surjective with $\ker(\pi)=(\iota(\mathfrak{m}_A))\cdot C$.
	\end{enumerate}
\end{definition}

When $A,B$ are Artinian Gorenstein, a free extension $C$ of $A$ with fiber $B$ is also Artinian Gorenstein (Lemma \ref{lem:ABCGor}).  It is well known that a graded Artinian Gorenstein (AG) algebra $R/I$ is completely determined by a single homogeneous polynomial in the dualizing module of $R$, called its Macaulay dual generator (Lemma~\ref{lem:GorensteinMDPD}).  We focus in this paper on the following question.
\begin{question}
	\label{ques:3}
	Suppose that $A$, $B$, and $C$ are graded Artinian Gorenstein, and $C$ is a free extension with base $A$ and fiber $B$.  What is the relationship between the Macaulay dual generators of $A$, $B$, and $C$?  For example, if $A$ and $B$ are graded Artinian Gorenstein algebras with Macaulay dual generators $F_A$ and $F_B$, respectively, can we construct using $F_A,F_B$ and some further information, a Macualay dual generator $F_C$ for a free extension $C$ of $A$ with fiber $B$?  
\end{question} 
We study graded Artinian Gorenstein algebras $C$ that are free extensions over  $A={\F}[t]/(t^n)$ with graded Artinian Gorenstein fibers $B$: we explicitly relate the dual generators of $A,B,C$, so answer the question in this special case. Let $R={\sf k}[x_1,\ldots,x_r]$ and $S=R[t]$ be polynomial rings with dual rings $Q_R$ and $Q_S$, respectively.  Our main result, Theorem \ref{thm:Main}, gives sufficient conditions for a Gorenstein algebra $C=S/I$ defined by a dual generator $F_C=T^{[n-1]}F_B+T^{[n-2]}G_1+\cdots +T^{[n-1-i]}G_i+\cdots +G_{n-1}  \in  Q_S$ with coefficients $F_B,G_i\in Q_R$, to be a free extension over $A$ with fiber the graded Artinian Gorenstein algebra $B$ having dual generator $F_B$. We show as Corollary~\ref{specialdualgen-thm} the Lemma 3.7 of \cite{IMM1} giving necessary and sufficient conditions for $T^{[n]}F_B+G, G\in Q_R$ to determine a free extension $C$ of $A={\sf k}[t]/(t^n)$ with fiber $B$. Some of the Examples \ref{McD2-ex}-\ref{ex:4} exemplify and apply the theorem, while others show the limitations of the hypotheses. In Section \ref{PBIsec} we for comparison describe a seemingly related but quite distinct notion of Projective Bundle Ideal defined by L. Smith and R.E. Stong in \cite{SmSt-PBI}. In Section \ref{invthsec} we give examples of free extensions $C$ of $A={\sf k}[t]/(t^n)$ defined by a polynomial $F_C$ as above arising from invariant theory,
which do not satisfy the hypotheses of Theorem \ref{thm:Main}.

\tableofcontents
\subsection{Macaulay Duality.}\label{MacDualsec}
Let $A$ be a graded Artinian algebra.  By the \emph{socle} of $A$ we mean the ideal $$(0:\mathfrak{m}_A)=\{x\in A \text { such that }a\cdot x=0, \ \forall \ a\in\mathfrak{m}_A\}.$$  The \emph{type} of $A$ is the ${\F}$-dimension of its socle;  if $A$ has type $1$, it is \emph{Gorenstein} Artinian. For an arbitrary Artinian algebra $A$, recall that the \emph{socle degree} $j_A$ of $A$ is the largest integer $j$ for which $A_j\neq 0$.  Let $A=R/I$ where $R={\F}[x_1,\ldots,x_n]$ is a graded polynomial ring (not necessarily standard) and $I\subset R$ is a homogeneous ideal of finite colength.  Define another graded polynomial ring $Q_R={\F}[X_1,\ldots,X_n]$ with a grading defined by $\deg(X_i)=-\deg(x_i)$ for each $i$.  We regard $Q=Q_R$ as a graded $R$-module where $x_i$ acts on $F=F(X_1,\ldots,X_n)$ via the partial differentiation operator $\partial/\partial x_i$.  If $\cha {\sf k}$ is finite and less or equal $j_A$, the highest socle degree of $A$, we take instead for the dualizing ring $Q={\sf k}_{DP}[X_1,\ldots X_r]$ the ring of divided powers with generators $\bigl\{X_i^{[d]}, 1\le i\le r, 1\le d\bigr\}$ and $X_k^{[i]}\cdot X_k^{[j]}={\binom{i+j}{j}}X^{[i+j]}$; and the contraction action of $R$ on $Q$ induced by $x_i^s\circ X_j^{[k]}=\delta_{i,j}X_i^{[k-s]}$ for $k\ge s$ and zero otherwise  (see \cite[Appendix A]{IK}). We will in fact make this choice, which leads to simpler expressions for $F$. So $Q=Q_R={\sf k}_{DP}[X_1,\ldots,X_r]$. We write $g\circ F$ for the polynomial in $Q$ that results from $g\in R$ acting on $F\in Q$ as contraction. Given polynomials $F_1,\ldots,F_k\in Q$, we define
\begin{equation}
\Ann_R(F_1,\ldots,F_k)=\{g\in R \mid g\circ F_i= 0\}.
\end{equation}
Clearly $\Ann_R(F_1,\ldots,F_k)$ is an ideal in $R$:  it is the annihilator ideal of the $R$-submodule in $Q$ whose generator-set is the span of $F_1,\ldots,F_k$. Given an ideal $I\subset R$ such that the quotient $A=R/I$ is Artinian, we denote by 
$I^\perp$, the $R$-submodule of $Q$
\begin{equation}\label{Iperpeq}
I^\perp=\{F\in Q \mid  I\circ F=0\}.
\end{equation}
\begin{lemma}\cite{Mac}\label{Mac1lem}  Let $\sf k$ be an arbitrary field. 
Then there is a 1-1 correspondence between\begin{enumerate}[i.]
\item finite length graded $A$-closed submodules $M$ of the dual $Q=\Hom(R,\sf k)$ having highest degree $j$,
\item Artinian graded quotients $R/I$, $I=\Ann_RM$ of highest socle degree $j$;
\end{enumerate}
the correspondence from (i) to (ii) is given by $M\to R/\Ann_RM$ and its converse is $R/I\to M=I^\perp$.
\end{lemma}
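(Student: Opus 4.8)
The plan is to reduce the entire statement to one structural fact: the contraction action exhibits $Q$ as the graded ${\sf k}$-linear dual of $R$. First I would record that, in each degree $d$, the rule $\langle g,F\rangle := g\circ F\in Q_0={\sf k}$ defines a perfect (nondegenerate) bilinear pairing between the finite-dimensional spaces $R_d$ and $Q_{-d}$, and that this pairing is adjoint to multiplication, $\langle gh,F\rangle=\langle h, g\circ F\rangle$. Two formal consequences get the correspondence off the ground. If $I\subseteq R$ is a homogeneous ideal and $F\in I^\perp$, then for any $g\in R$ and $h\in I$ we have $h\circ(g\circ F)=(hg)\circ F=0$ because $hg\in I$; hence $g\circ F\in I^\perp$ and $I^\perp$ is a graded $R$-submodule of $Q$. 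Symmetrically, if $M$ is a graded $R$-submodule then $\Ann_R(M)$ is a homogeneous ideal. Thus the two assignments $I\mapsto I^\perp$ and $M\mapsto \Ann_R(M)$ land in the stated classes.

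Next I would show that the two assignments are mutually inverse. The one genuinely non-formal point is that the pairing decouples over degrees only because of the ideal/submodule structure: for $F\in Q_{-d}$ with $I_d\circ F=0$, any $h\in I_e$ with $e<d$ and any $u\in R_{d-e}$ satisfy $\langle u, h\circ F\rangle=\langle uh,F\rangle=(uh)\circ F=0$, since $uh\in I_d$; nondegeneracy of the pairing $R_{d-e}\times Q_{-(d-e)}\to{\sf k}$ then forces $h\circ F=0$, so that $(I^\perp)_{-d}=(I_d)^\perp$. Granting this decoupling, the claim becomes finite-dimensional linear algebra degree by degree, using $(W^\perp)^\perp=W$ and $\dim_{\sf k}W+\dim_{\sf k}W^\perp=\dim_{\sf k}R_d$ for any subspace $W\subseteq R_d$. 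Assembling these identities over all $d$, and using the analogous decoupling for a submodule $M$, I would conclude $\Ann_R(I^\perp)=I$ and $(\Ann_R M)^\perp=M$, giving an inclusion-reversing bijection between graded $R$-submodules of $Q$ and homogeneous ideals of $R$.

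Finally I would match the finiteness and degree conditions. The degree-wise identification above furnishes isomorphisms $(I^\perp)_{-d}\cong\big((R/I)_d\big)^\ast$, so $\dim_{\sf k}I^\perp=\dim_{\sf k}R/I$; in particular $I^\perp$ has finite length precisely when $R/I$ is Artinian, which restricts the bijection to the two enumerated classes. The same isomorphisms show that the degrees $d$ with $(R/I)_d\neq 0$ are exactly the degrees carrying a nonzero piece of $M=I^\perp$, so the top such degree, read as the polynomial degree $|\beta|$ of a divided-power monomial $X^{[\beta]}$, is simultaneously the socle degree $j$ of $R/I$ and the highest degree of $M$; a standard refinement of the same duality further identifies the socle of $R/I$ with the ${\sf k}$-dual of the minimal generators $M/\mathfrak{m}_R M$ of $M$, recovering in particular the Gorenstein case where $M$ is cyclic. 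The step I expect to demand the most care is precisely this grading bookkeeping, together with the degree-decoupling argument: keeping the convention $\deg X_i=-\deg x_i$ consistent so that ``highest degree $j$'' on the module side genuinely reads off as ``highest socle degree $j$'' on the quotient side. Everything else is formal once the perfect pairing is in hand.
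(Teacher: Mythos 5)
The paper does not prove Lemma~\ref{Mac1lem}; it is quoted from Macaulay \cite{Mac} without argument, so there is no in-text proof to compare against. Your write-up is the standard proof of Macaulay duality and it is correct: the perfect degree-$d$ pairing between $R_d$ and $Q_{-d}$ (which is why the divided-power model is needed over an arbitrary field), its adjointness to multiplication, the degree-decoupling identities $(I^\perp)_{-d}=(I_d)^\perp$ and $\Ann_R(M)_d=(M_{-d})^\perp$ (the latter genuinely using that $M$ is an $R$-submodule), and then $(W^\perp)^\perp=W$ degree by degree give the mutually inverse inclusion-reversing bijections, with $\dim_{\sf k}(I^\perp)_{-d}=\dim_{\sf k}(R/I)_d$ matching the finiteness and top-degree conditions. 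Your closing observation identifying the socle of $R/I$ with the dual of $M/\mathfrak{m}_R M$ is also correct and is exactly what specializes to the cyclic case in Lemma~\ref{lem:GorensteinMDPD}.
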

When $\cha {\F}>j_A$ or $\cha {\F}=0$ we can obtain the analogous result using the partial differentiation, action of $R$ on $Q={\sf k}[X_1,\ldots, X_r]$ viewed as a polynomial ring.  We say that $A$ is a \emph{Poincar\'e duality algebra} if $A_j\cong {\F}$ where $j=j_A$, and if for every degree $0\leq i\leq j$ the vector space pairing
\begin{equation}\label{Poincaredualeq}
\xymatrixrowsep{.5pc}\xymatrix{A_i\times A_{j-i}\ar[r] & A_j\cong {\F}\\
(\alpha,\beta) \ar@{|->}[r] & \alpha\cdot\beta\\}
\end{equation}
is non-degenerate.\par
The following result of F.H.S. Macaulay is well known (see \cite[p.527]{Ei}, \cite[\S1B]{I1}), \cite[\S 2.3]{IK}, \cite[\S 73]{Mac}, \cite[Propositions I.4.2 and I.5.2]{MeSm}).
\begin{lemma}
	\label{lem:GorensteinMDPD}
	Let $A$ be a graded Artinian algebra.  The following are equivalent:
	\begin{enumerate}[i.]
		\item $A$ is Gorenstein of socle degree $j$.
		\item $A=R/I$ where $I=\Ann_RF$ for some homogeneous $F\in Q$ of degree $j$, unique up to a ${\F}^{\ast}$ multiple.
		\item $A$ is a Poincar\'e duality algebra of socle degree $j$. 
	\end{enumerate}
\end{lemma}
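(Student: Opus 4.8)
The plan is to establish the three-way equivalence by proving the two halves (i)$\iff$(ii) and (i)$\iff$(iii) separately, using Macaulay's correspondence (Lemma~\ref{Mac1lem}) together with the adjointness of the contraction action of $R$ on $Q$. Throughout I write $M=I^\perp\subseteq Q$ for the inverse system of $A=R/I$, so that by Lemma~\ref{Mac1lem} we have $I=\Ann_R(M)$ and $M\leftrightarrow A$ under mutually inverse correspondences; I also write $\mu(M)=\dim_\F M/\mathfrak m_R M$ for the minimal number of $R$-generators of $M$.

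For (i)$\iff$(ii), the key is to translate the socle of $A$ into the generators of $M$. I would use the $\F$-bilinear pairing $A\times M\to\F$, $(\bar g,G)\mapsto \epsilon(g\circ G)$, where $\epsilon$ extracts the degree-$0$ part; its perfectness is precisely the content of Macaulay's correspondence (one checks injectivity of $A\to M^\vee$ by noting that if $g\circ G_0\neq0$ then, since $M$ is closed under the action, a further contraction $G_1=h\circ G_0$ lies in $M$ with $\epsilon(g\circ G_1)\neq0$). The adjointness $\epsilon((x_ig)\circ G)=\epsilon(g\circ(x_i\circ G))$ then shows that $\bar g\in\Soc(A)=(0:\mathfrak m_A)$ if and only if $\bar g$ annihilates $\mathfrak m_R M$ under the pairing, whence $\Soc(A)\cong (M/\mathfrak m_R M)^\vee$ and $\dim_\F\Soc(A)=\mu(M)$. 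Therefore $A$ is Gorenstein (type $1$) exactly when $M$ is cyclic, $M=R\circ F$; since $\Ann_R(R\circ F)=\Ann_R(F)$, this is the same as $I=\Ann_R(F)$ for a single homogeneous $F$. Choosing $F$ to be a generator in the top degree identifies $\deg F$ with the socle degree $j$, and uniqueness up to $\F^\ast$ follows because any two homogeneous top-degree generators of a cyclic module differ by an element of $R_0=\F$, hence by a nonzero scalar.

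For (i)$\iff$(iii) I would argue directly with the multiplication pairing of \eqref{Poincaredualeq}. For (iii)$\Rightarrow$(i): the top degree $A_j$ always lies in $\Soc(A)$, so it suffices to prove the reverse inclusion; if $a\in\Soc(A)$ is homogeneous of degree $i<j$, then $a\cdot A_{j-i}\subseteq a\cdot\mathfrak m_A=0$, and non-degeneracy of $A_i\times A_{j-i}\to A_j\cong\F$ forces $a=0$, so $\Soc(A)=A_j\cong\F$ and $A$ is Gorenstein. For (i)$\Rightarrow$(iii): given $A$ Gorenstein, $A_j\neq0$ lies in the one-dimensional socle, so $\Soc(A)=A_j\cong\F$; to prove non-degeneracy, take a nonzero $a\in A_i$ and let $c$ be a nonzero element of maximal degree in the principal ideal $(a)$. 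Then $\mathfrak m_A\cdot c=0$ by maximality, so $c\in\Soc(A)=A_j$, and writing $c=a\cdot b$ with $b\in A_{j-i}$ exhibits a nonzero product. Symmetry of the argument, and the resulting equality $\dim A_i=\dim A_{j-i}$, upgrade this to a perfect pairing in every degree.

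The main obstacle, and the step I would spell out most carefully, is the identification $\Soc(A)\cong(M/\mathfrak m_R M)^\vee$, since it rests on the perfectness of the contraction pairing and on $M$ being closed under the $R$-action; it is here that the arbitrary-field hypothesis matters. The setup has been arranged—by taking the divided-power dual $Q={\sf k}_{DP}[X_1,\dots,X_r]$ with the contraction action when $\cha{\sf k}\le j_A$—precisely so that Lemma~\ref{Mac1lem} and this pairing remain valid in all characteristics, so no separate characteristic-zero argument is needed. The remaining verifications ($\Ann_R(R\circ F)=\Ann_R(F)$, that the contractions of $F$ span degrees $0$ through $\deg F$, and the degree bookkeeping matching $\deg F$ to $j$) are routine.
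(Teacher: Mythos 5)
The paper offers no proof of this lemma: it is stated as a well-known result of Macaulay, with the argument delegated to the cited references (Eisenbud, \cite[\S 73]{Mac}, Meyer--Smith Propositions I.4.2 and I.5.2, etc.). So there is nothing in the paper to compare against line by line; judged on its own, your proof is correct and is essentially the standard argument that those references give. The two halves are sound: for (i)$\iff$(ii) the identification $\Soc(A)\cong(M/\mathfrak m_R M)^\vee$ via the perfect pairing $(\bar g,G)\mapsto\epsilon(g\circ G)$ and the adjointness of contraction correctly converts ``type one'' into ``$M$ cyclic,'' and your injectivity check (contract $g\circ G_0$ down to a nonzero constant term by a further monomial $h$, using that $M$ is an $R$-submodule) is exactly the point that makes the pairing perfect over an arbitrary field; for (ii)$\Rightarrow$(i) you do implicitly use that the two correspondences of Lemma~\ref{Mac1lem} are mutually inverse, i.e.\ $(\Ann_R(F))^\perp=R\circ F$, which is fine since that is what the cited lemma asserts. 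The (i)$\iff$(iii) half — top degree is always in the socle, nondegeneracy kills lower-degree socle elements, and conversely a nonzero $a$ multiplies up to a nonzero homogeneous element of maximal degree in $(a)$ which must land in $A_j$ — is the Meyer--Smith argument verbatim; the only cosmetic remark is that once the pairing $A_i\times A_{j-i}\to A_j$ has trivial kernel on both sides between finite-dimensional spaces, perfectness and $\dim A_i=\dim A_{j-i}$ follow automatically, so the final ``upgrade'' step needs no separate symmetry appeal. No gaps.
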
 
The polynomial $F\in Q$ in (ii) is called the \emph{Macaulay dual generator} of $A$. \par
\subsection{Free Extensions.}\label{freeextsec}
The notion of free extension generalizes that of a tensor product, \cite[\S4.2-4.4]{H-W}  and may be regarded as a deformation of the tensor product \cite[\S2]{IMM1}.  We recall Definition \ref{def:FreeExt}:
\begin{definition}\label{freeextdef} Let $A$, $B$, and $C$ be graded Artinian algebras, with maps $\iota\colon A\rightarrow C$ and $\pi\colon C\rightarrow B$.  We say that $C$ is a \emph{free extension} over $A$ with fiber $B$ if the following conditions both hold:
	\begin{enumerate}[i.]
		\item $\iota\colon A\rightarrow C$ makes $C$ into a free $A$-module, and
		\item $\pi\colon C\rightarrow B$ is surjective with $\ker(\pi)=\mathfrak{m}_A\cdot C$.
	\end{enumerate}
\end{definition}
A sequence 
		\begin{equation}\label{freeextseq}
		\xymatrix{{\sf k}\ar[r] & A\ar[r]^-\iota & C\ar[r]^-\pi & B\ar[r] & {\sf k}}
		\end{equation}
		 is \emph{coexact}  if $\pi$ is surjective and $\ker(\pi)=\mathfrak{m}_A\cdot C$, the ideal in $C$ generated by the image under $\iota$ of positive degree elements of $A$, (this is just condition (ii) of Definition \ref{freeextdef}). This notion of coexact sequence arose in topology -- see \cite{MoSm}.
		 It is straightforward to show the following (see \cite[\S 3.3]{IMM2} for a proof).
		
\begin{lemma}\label{coexactfreelem}
Let $A, B, C$ be graded Artinian algebras with maps $\iota\colon A\rightarrow C$ and $\pi\colon C\rightarrow B$ and suppose that $\pi$ is surjective.  Then the following are equivalent.
	\begin{enumerate}[(i).]
		\item For every $\F$-linear section ${\sf s}\colon B\rightarrow C$ of $\pi$, the map $\Phi_{\sf s}=\iota\otimes{\sf s}\colon _A\left(A\otimes_{\sf k}B\right)\rightarrow _AC$ is an isomorphism, i.e. $C$ is an $A$-module tensor product.
		\item The sequence \eqref{freeextseq}
		 is coexact and $\iota\colon A\rightarrow C$ is a free extension.
		\item $\iota\colon A\rightarrow C$ is a free extension and $\ker(\pi)=(\iota(\mathfrak{m}_A))\cdot C$.
		\item $\ker(\pi)=(\iota(\mathfrak{m}_A))\cdot C$ and $\dim_{\sf k}C=\dim_{\sf k}A\cdot \dim_{\sf k}B$.
	\end{enumerate}

\end{lemma}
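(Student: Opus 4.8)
The plan is to prove the four conditions equivalent by establishing a short cycle of implications, after first observing that two of them are essentially identical. Throughout I read ``$\iota\colon A\to C$ is a free extension'' as the single requirement that $\iota$ make $C$ into a free $A$-module (Definition \ref{freeextdef}(i)), the standing hypothesis being that $\pi$ is surjective. Unwinding the definition of coexact sequence, condition (ii) says exactly ``$C$ is a free $A$-module and $\ker(\pi)=(\iota(A_+))C$,'' and since $A_+=\mathfrak{m}_A$ this is verbatim condition (iii). Hence (ii)$\Leftrightarrow$(iii) is immediate, and it remains to run the cycle (i)$\Rightarrow$(iii)$\Rightarrow$(iv)$\Rightarrow$(i).

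For (iii)$\Rightarrow$(iv): from $\ker(\pi)=(\iota(\mathfrak{m}_A))C=\mathfrak{m}_AC$ and surjectivity of $\pi$ I get $B\cong C/\mathfrak{m}_AC=C\otimes_A{\sf k}$. Writing the free module $C$ as $A^{\oplus r}$ (graded, with shifts), reduction mod $\mathfrak{m}_A$ gives ${\sf k}^r\cong C\otimes_A{\sf k}\cong B$, so $r=\dim_{\sf k}B$ and therefore $\dim_{\sf k}C=r\cdot\dim_{\sf k}A=\dim_{\sf k}A\cdot\dim_{\sf k}B$, which is (iv). For (iv)$\Rightarrow$(i): since $\dim_{\sf k}(A\otimes_{\sf k}B)=\dim_{\sf k}A\cdot\dim_{\sf k}B=\dim_{\sf k}C$, the ${\sf k}$-linear map $\Phi_{\sf s}$ is an isomorphism as soon as it is surjective. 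To see surjectivity for an arbitrary section $\sf s$, note that for $c\in C$ one has ${\sf s}(\pi(c))=\Phi_{\sf s}(1\otimes\pi(c))\in\Im\Phi_{\sf s}$ and $c-{\sf s}(\pi(c))\in\ker(\pi)=\mathfrak{m}_AC$, so $C=\Im\Phi_{\sf s}+\mathfrak{m}_AC$; the graded Nakayama lemma then forces $\Im\Phi_{\sf s}=C$.

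The remaining implication (i)$\Rightarrow$(iii) is where the real work and the ``for every section'' quantifier come in. Fixing one section already gives $C\cong A\otimes_{\sf k}B$ as $A$-modules, hence $C$ free over $A$; the issue is the kernel condition $\ker(\pi)=\mathfrak{m}_AC$. Setting $\psi=\pi\circ\iota\colon A\to B$, for any section one computes $\pi\circ\Phi_{\sf s}(a\otimes b)=\psi(a)\,b$, a formula independent of $\sf s$; transporting along the isomorphism $\Phi_{\sf s}$ identifies $\ker(\pi)$ with $\{\sum a_i\otimes b_i:\sum\psi(a_i)b_i=0\}$ and $\mathfrak{m}_AC$ with $\mathfrak{m}_A\otimes_{\sf k}B$. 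These two submodules coincide precisely when $\psi(\mathfrak{m}_A)=0$, for then $a\otimes b\mapsto\psi(a)b$ is the augmentation $\epsilon\otimes\mathrm{id}_B$, whose kernel is exactly $\mathfrak{m}_A\otimes_{\sf k}B$. Thus everything reduces to showing $\pi\iota(\mathfrak{m}_A)=0$.

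I expect this last reduction to be the main obstacle, and it is exactly here that I must use \emph{all} sections rather than one. I would argue by contraposition: if some homogeneous $a_0\in\mathfrak{m}_A$ of degree $d>0$ had $b_0:=\pi\iota(a_0)\neq0$ in $B_d$, I would build a section $\sf s$ with ${\sf s}(b_0)=\iota(a_0)$ (legitimate since $\pi(\iota(a_0))=b_0$, extending over a homogeneous basis of $B$). Then $\Phi_{\sf s}(a_0\otimes1-1\otimes b_0)=\iota(a_0)-\iota(a_0)=0$, while $a_0\otimes1-1\otimes b_0\neq0$ because $a_0\otimes1\in A_d\otimes B_0$ and $1\otimes b_0\in A_0\otimes B_d$ lie in distinct summands of $(A\otimes_{\sf k}B)_d$ (here $d\ge1$). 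This bad section would contradict (i), so $\pi\iota(\mathfrak{m}_A)=0$, completing (i)$\Rightarrow$(iii). The only points needing care elsewhere are the graded refinements --- that $C$ is finitely generated over $A$ so that Nakayama applies, and that ``free of rank $r$'' is read with graded shifts --- both routine in the Artinian graded setting.
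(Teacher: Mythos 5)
The paper does not actually prove this lemma here --- it is quoted from \cite[Lemma 2.2]{IMM} --- so there is no in-text argument to compare against; judged on its own, your proof is correct and complete. The reduction (ii)$\Leftrightarrow$(iii), the rank computation for (iii)$\Rightarrow$(iv), and the surjectivity-plus-dimension-count argument with graded Nakayama for (iv)$\Rightarrow$(i) are all sound, and you correctly isolate the only genuinely delicate point: in (i)$\Rightarrow$(iii) one must show $\pi\iota(\mathfrak m_A)=0$, and this is exactly where the quantifier ``for every section'' earns its keep, via your construction of a section sending $b_0=\pi\iota(a_0)$ to $\iota(a_0)$ so that $\Phi_{\sf s}$ kills the nonzero element $a_0\otimes 1-1\otimes b_0$. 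Two small points worth making explicit in a write-up: first, your computation $\Phi_{\sf s}(a_0\otimes 1)=\iota(a_0)$ tacitly uses ${\sf s}(1_B)=1_C$, which you should arrange when extending over a homogeneous basis (this is always possible since $\pi(1_C)=1_B$, and $1_B$ and $b_0$ lie in different degrees so there is no conflict); second, the identification $\mathfrak m_A C=\Phi_{\sf s}(\mathfrak m_A\otimes_{\sf k}B)$ uses the $A$-linearity of $\Phi_{\sf s}$ together with $\mathfrak m_A\cdot(A\otimes_{\sf k}B)=\mathfrak m_A\otimes_{\sf k}B$, both of which hold but deserve a line. Neither is a gap.
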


Evidently if $A$, $B$, and $C$ are all graded Artinian Gorenstein, or Poincar\'e duality algebras, there is a condition, seemingly weaker than (iv), which is equivalent to items (i)-(iv) of Lemma \ref{coexactfreelem}.  We use the shorthand notation $a\cdot c$ to mean $\iota(a)\cdot c$.

\begin{lemma}\label{lem:SS} \cite[Lemma VI.4.11]{MeSm}
	Let $\xymatrix{\kk\ar[r] & A\ar[r]^-\iota & C\ar[r]^-\pi & B\ar[r] & \kk}$ be a coexact sequence of Poincar\'{e} duality algebras, such that their socle degrees satisfy $j_C=j_A+j_B$.  Then $C$ is a free $A$-module via $\iota$.
\end{lemma}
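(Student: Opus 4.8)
The plan is to prove freeness directly, by showing that the minimal $A$-module cover of $C$ is an isomorphism, and to reduce this in two formal steps to a single nonvanishing statement in top degree that is exactly what the socle-degree hypothesis controls. Since the sequence is coexact we have $\ker\pi=(\iota(\mathfrak{m}_A))C$, so $\pi$ induces an isomorphism $B\cong C/\iota(\mathfrak{m}_A)C=C\otimes_A{\sf k}$. First I would choose homogeneous $c_1,\dots,c_s\in C$ whose images form a ${\sf k}$-basis of $B$; by the graded Nakayama lemma they generate $C$ over $A$, so with $d_i=\deg c_i$ (a multiset realizing the Hilbert function of $B$) one gets a degree-preserving surjection of graded $A$-modules $\phi\colon F=\bigoplus_{i=1}^s A(-d_i)\twoheadrightarrow C$, $e_i\mapsto c_i$. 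Freeness of $C$ is precisely the injectivity of $\phi$. (Alternatively, by Lemma~\ref{coexactfreelem}(iv) it would suffice to prove the dimension equality $\dim_{\sf k}C=\dim_{\sf k}A\cdot\dim_{\sf k}B$, but the direct cover gives a cleaner route to the socle reduction below.)

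Second, I would reduce injectivity of $\phi$ to injectivity on socles. Since $A$ is Artinian, every nonzero submodule of $F$ has nonzero $A$-socle, and $\Soc_A(\ker\phi)=\ker\phi\cap\Soc_A(F)$; hence $\phi$ is injective as soon as it is injective on $\Soc_A(F)$. Writing $\omega_A$ for a generator of the one-dimensional socle $\Soc(A)=A_{j_A}$, we have $\Soc_A(F)=\bigoplus_i{\sf k}\,(\omega_A e_i)$, so I must prove that $\iota(\omega_A)c_1,\dots,\iota(\omega_A)c_s$ are linearly independent. Because $\iota(\omega_A)\cdot\iota(\mathfrak{m}_A)=0$, the assignment $\bar c\mapsto\iota(\omega_A)\,c$ (for any lift $c$) is a well-defined graded map $\lambda\colon B\to C$ raising degrees by $j_A$, with $\lambda(\bar c_i)=\iota(\omega_A)c_i$; thus the required independence is exactly the injectivity of $\lambda$.

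Third — the heart of the matter — I would deduce injectivity of $\lambda$ from a single nonvanishing. Given $0\neq b\in B_d$, Poincar\'e duality for $B$ (Lemma~\ref{lem:GorensteinMDPD}) yields $b^\ast\in B_{j_B-d}$ with $bb^\ast=\omega_B$, the socle generator of $B$. Lifting $b,b^\ast$ to $C$ and using $\iota(\omega_A)\ker\pi=0$, one checks $\lambda(b)\cdot\widetilde{b^\ast}=\lambda(\omega_B)$, so $\lambda(b)\neq0$ whenever $\lambda(\omega_B)\neq0$; injectivity of $\lambda$ therefore reduces to $\lambda(\omega_B)\neq0$. Now let $c_{i_0}$ be the generator with $d_{i_0}=j_B$ (unique up to scalar since $\dim_{\sf k}B_{j_B}=1$); it lifts $\omega_B$, and $\lambda(\omega_B)=\iota(\omega_A)c_{i_0}=\phi(\omega_A e_{i_0})$. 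Here the socle-degree hypothesis enters decisively: since $A$ has top degree $j_A$, each $d_i\le j_B$, and $j_C=j_A+j_B$, the top graded piece $F_{j_C}$ is one-dimensional, spanned exactly by $\omega_A e_{i_0}$; as $\phi$ is surjective and $C_{j_C}={\sf k}\,\omega_C\neq0$, the induced map $F_{j_C}\to C_{j_C}$ is an isomorphism, whence $\phi(\omega_A e_{i_0})\neq0$. This gives $\lambda(\omega_B)\neq0$, hence $\lambda$, and then $\phi$, are injective, so $C$ is $A$-free.

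I expect the main obstacle to be isolating this last nonvanishing and recognizing that it is the sole place all the hypotheses are used: the two reductions are formal, but the claim $\iota(\omega_A)\cdot(\text{lift of }\omega_B)\neq0$ is false without $j_C=j_A+j_B$ (if $j_C<j_A+j_B$ the element $\omega_A e_{i_0}$ maps into a vanishing degree of $C$, matching the standard examples in which coexactness alone fails to force freeness). The care required is to verify that $\lambda$ is well defined independently of lifts, and that the degree bookkeeping forcing $\dim_{\sf k}F_{j_C}=1$ genuinely uses the Gorenstein property of all three algebras together with socle additivity.
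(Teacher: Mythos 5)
Your proof is correct, and it is worth noting that the paper itself gives no argument for this lemma: it is quoted verbatim from \cite[Lemma VI.4.11]{MeSm}, so yours is the only proof on the table here. I checked the three steps and they hold up: the minimal cover $\phi\colon\bigoplus_i A(-d_i)\to C$ exists by coexactness plus graded Nakayama; since $F$ has finite length, a nonzero $\ker\phi$ would meet $\Soc_A(F)=\bigoplus_i \kk\,\omega_A e_i$, so injectivity on socles suffices; the map $\lambda(\bar c)=\iota(\omega_A)c$ is well defined precisely because $\iota(\omega_A)\ker\pi=\iota(\omega_A)\iota(\mathfrak{m}_A)C=0$; and the Poincar\'e duality pairing on $B$ reduces injectivity of $\lambda$ to $\lambda(\omega_B)\neq 0$, which your degree count ($d_i\leq j_B$ with equality for a unique $i_0$ since $\dim_{\kk}B_{j_B}=1$, so $F_{j_A+j_B}=\kk\,\omega_A e_{i_0}$) extracts from the hypothesis $j_C=j_A+j_B$. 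This is the same circle of ideas as in Meyer--Smith, whose argument likewise pivots on fundamental classes and the additivity of formal dimensions, but your version has the advantage of being self-contained in the paper's own toolkit (Lemmas \ref{coexactfreelem} and \ref{lem:GorensteinMDPD}) rather than importing the machinery of \cite{MeSm}. Two small sharpenings you could record: Poincar\'e duality of $C$ is never actually used beyond $C_{j_C}\neq 0$, which holds by definition of $j_C$; and coexactness alone already forces $j_C\leq j_A+j_B$ (otherwise $\phi$ could not be surjective in degree $j_C$), so the substantive content of the hypothesis is the inequality $j_C\geq j_A+j_B$ --- exactly the non-degeneracy your final step exploits, consistent with your closing remark about where the argument would break.
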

\begin{proof}
	Note that coexactness of the sequence $\xymatrix{\kk\ar[r] & A\ar[r]^-\iota & C\ar[r]^-\pi & B\ar[r] & \kk}$ implies that $B\cong C/\mathfrak{m}_A\cdot C$, and hence by Nakayama's lemma any $\F$-linear $\pi$-section ${\sf s}\colon B\rightarrow C$ defines a surjection of $A$-modules $\Phi_{\sf s}=\iota\otimes{\sf s}\colon A\otimes_{\sf k}B\rightarrow C$.  In particular, if $j_C=j_A+j_B$ then the two socle degrees agree which means that $\Phi_{\sf s}$ maps a socle generator of $A\otimes_{\sf k}B$ onto a socle generator of $C$.  By the Poincar\'e duality condition on $C$ and $A\otimes_{\sf k}B$, this implies that $\Phi_{\sf s}$ must be injective as well. 
\end{proof}

The following lemma shows that free extensions preserve the Gorenstein property. We thank the referee for suggesting that we include this fact.
\begin{lemma}
	\label{lem:ABCGor}
	Suppose that $A$, $B$, and $C$ are graded Artinian algebras such that $C$ is a free extension with base $A$ and fiber $B$ via maps $\iota\colon A\rightarrow C$ and $\pi\colon C\rightarrow B$.  If $A$ and $B$ are Gorenstein then so is $C$.
\end{lemma}
\begin{proof}
	Assume that $A$ and $B$ are Gorenstein, and let $c_0\in (0:\mathfrak{m}_C)\subset C$ be a homogeneous socle element.  Fix a homogeneous $A$-basis for $C$, say $c_1,\ldots,c_n$ so that $c_0=a_1c_1+\cdot+a_nc_n$ for some homogeneous $a_1,\ldots,a_n\in A$.  Then for any $a\in\mathfrak{m}_A$ we have $\iota(a)\in\mathfrak{m}_C$, and hence 
	$$a\cdot c=\iota(a)\cdot c=(a\cdot a_1)c_1+\cdots+(a\cdot a_n)c_n=0.$$
	Since the $c_1,\ldots,c_n$ are $A$-linearly independent, we must have $a\cdot a_i=0$ for all $i$, and since this holds for every $a\in\mathfrak{m}_A$ we deduce that every $a_i$ lies in the socle of $A$.  Since $A$ has one dimensional socle, there exists field elements $k_1,\ldots,k_n\in\F$ for which $a_i=k_i\cdot a_0$ where $a_0$ is some fixed socle generator of $A$.  Thus we have 
	$$c_0=a_0\cdot \underbrace{(k_1c_1+\cdots+k_nc_n)}_{d_0}.$$
	We would like to show that $d_0$ is a homogeneous $\pi$-lift of some socle generator of $B$.  To see this set $b_0=\pi(d_0)$, and fix $b\in\mathfrak{m}_B$.  Let $c\in C$ be any (homogeneous) lift of $b$.  Then write $c\cdot d_0=a_1'c_1+\cdots+a_n'c_n$ for some $a_i'\in A$.  Since $c\in\mathfrak{m}_C$ we must have $a_0\cdot c\cdot d_0=c\cdot c_0=0$ from which we deduce that $a_i'\in\mathfrak{m}_A$, and hence that $c\cdot d_0\in\iota(\mathfrak{m}_A)\cdot C=\ker(\pi)$.  Thus $\pi(c\cdot d_0)=b\cdot b_0=0$ hence $b_0\in(0:\mathfrak{m}_B)$.  Therefore we have shown that $c_0=\iota(a_0)\cdot d_0$ where $a_0$ is a socle generator of $A$, and $d_0$ is a $\pi$-lift of $b_0$, a socle generator of $B$.  Since any two $\pi$-lifts of $b_0$ differ by an element of $\ker(\pi)=\mathfrak{m}_A\cdot C$, we deduce that the space of such $c_0$ is indeed one dimensional, hence $C$ is Gorenstein.
\end{proof}

\begin{remark}
	\label{rem:SS}
	A stronger result was established in a 2008 unpublished note ``Coexact Sequences of Poincar\'{e} Duality Algebras'', by L. Smith and R.E. Stong.  In particular they show that if any two of $A$, $B$, and $C$ are Gorenstein, then so is the third.
\end{remark}

The next example shows that the relationship among Macaulay dual generators of the algebras in a free extension is not straightforward: the dual generator of $C$ may not be simply a product of those for $A,B$.
\begin{example}\label{t2freeextex}
Let $R={\F}[x_1,x_2,x_3]$ be the graded polynomial ring in three variables with the standard grading, and let $e_i$ be the $i$-th elementary symmetric polynomial in those variables $x_1,x_2,x_3$.  Let $C=R/(e_1,e_2,e_3)$ the complete intersection of ideal generator degrees $1,2,3$ and Hilbert function $H(C)=(1,2,2,1)$,  let $B=R/I$ with $I=(x_1+x_2,x_1x_2,x_3)$, of Hilbert function $H(B)=(1,1)$ and denote by $\pi\colon C\rightarrow B$ the natural projection map.  Define the polynomial rings $S=R[t]$, $W={\sf k}[y_1,y_2,t]$ with weights ${\sf w}(y_1,y_2,t)=(1,2,1)$ and $A$ satisfying
\begin{align*}
A= &\, {\F}[y_1,y_2,t]/(y_1+t,y_2+y_1t,y_2t), & \\
\cong & \,{\F}[t]/(t^3)
\end{align*}
of Hilbert function $H(A)=(1,1,1)$, and define the map $\iota\colon A\rightarrow C$ by $\iota(y_1)=x_1+x_2$, $\iota(y_2)=x_1x_2$ and $\iota(t)=x_3$.  Evidently, each of $A,B,C$ are complete intersections (so Gorenstein) and $\dim_{\F}(C)=\dim_{\F}(A)\cdot\dim_{\F}(B)$. Also, the ideal $(\iota(\mathfrak{m}_A))C=(\iota(t))C=x_3C$ and is the kernel of $\pi: C\to B$.  Hence the sequence $\xymatrix{{\sf k}\ar[r] & A\ar[r]^-\iota & C\ar[r]^-\pi & B\ar[r] & {\sf k}}$ is coexact.  By Lemma \ref{coexactfreelem}(iv) $C$ is a free extension of the base $A$ with fiber $B$.  Let $Q_R={\F}_{DP}[X_1,X_2,X_3], Q_S={\F}_{DP}[X_1,X_2,X_3,T],$ and $Q_W={\sf k}[Y_1,Y_2,T]$ be the dual rings of $R$, $S$ and $W$, respectively.  We may take as dual generator for $A$ any element $F_A= T^{[2]} +Y_1^{[2]} - Y_1T +Y_2 + \beta (Y_1 -T)$ with $ \beta\in \sf k$. Then the Macaulay dual generators for $A,B,C$, respectively, are\footnote{We could have simply chosen $A={\sf k}[t]/(t^3)$ with $F_A=T^{[2]}$, but have chosen to retain a choice close to the invariant theory origin of the example.}
\begin{align*}
F_A= & \,T^{[2]} +Y_1^{[2]} - Y_1T +Y_2 + \beta (Y_1 -T),\\
F_B= &\, (X_1-X_2),\\
F_C= &\, (X_1-X_2)(X_1-X_3)(X_2-X_3) \text { and, taking } T=X_3,\\
= &\, 2T^{[2]}F_B-T(X_1+X_2)F_B+X_1X_2F_B.
\end{align*}
\end{example}
\noindent
\begin{remark}[Lefschetz Properties]\label{Lefrem} 
Recall that the Jordan type $P_\ell$ of a linear element $\ell$ of an Artinian graded algebra $A$ is the partition giving the Jordan blocks of multiplication by $\ell$ on $A$. Recall that a linear element $\ell$ of a graded Artinian algebra $A$ is termed \emph{strong Lefschetz} (SL) if the multiplication map $\times \ell^d\colon A_i\to A_{i+d}$ has maximal rank for every pair of integers $i$ and $d$  \cite[Definition 3.8]{H-W}.  This is equivalent to saying that the partition $P_\ell$ giving the Jordan type of multiplication by $\ell $ is the conjugate to the partition defined by $H(A)$ \cite[Proposition~3.64]{H-W}\footnote{The assumption that $H(A)$ is unimodal in \cite[Proposition 3.64]{H-W} is not necessary, as shown in our forthcoming \cite[Proposition 2.8]{IMM2}}.  When $H(A)$ is symmetric, e.g. if $A$ is Gorenstein, the Lefschetz condition is also equivalent to the condition that the multiplication maps $\ell^{j-2i}\colon A_i\rightarrow A_{j-i}$ are isomorphisms for each degree $0\leq i\leq j/2$.

\par T. Harima and J. Watanabe \cite[Theorem~6.1]{HW3} (see also \cite[\S4.2-4.4]{H-W} for $\cha {\sf k}=0$, and the authors in \cite[Theorem 2.3]{IMM1} for $\cha {\sf k}=0$ or $\cha {\sf k}\ge j_A+j_B$) showed that if $A,B,C$ are graded Artinian algebras (standard or not) with symmetric Hilbert functions, and if $C$ is a free extension with base $A$ and fiber $B$, then both $A$ and $B$ have the strong Lefschetz property implies that $C$ is strong Lefschetz.
The converse is false, even if $A$, $B$, and $C$ all have the standard grading:  the free extension $C$ being SL need not imply that both the base $A$ or the fiber $B$ are SL.  See  Harima-Watanabe \cite{HW3} for a non-standard graded example, and \cite[Example~3.8]{IMM1}, which is succinctly given here as Example \ref{ex:Junzo}, for a standard graded one. \par

An element $\ell\in \mathfrak m_A=\sum_{i=1}^{j_A}A_i$ -- that is, $\ell$, which may be nonhomogeneous, has order at least one -- is said to have \emph{strong Lefschetz Jordan type} (SLJT) if $P_\ell=H(A)^\vee$. It is open whether $A,B$ both having elements of strong Lefschetz Jordan type implies that the free extension $C$ does, when $H(A)$ and $H(B)$ are in addition symmetric \cite[Question 2.17]{IMM1}. \par
When $A={\sf k}[t]/(t^n)$ is standard graded, it is SL; when $A$ is not standard graded then $t$ is an element of strong Lefschetz Jordan type (SLJT) and $H(A)$ is symmetric.  Many of our examples have $B$ standard graded of embedding dimension two, so then $B$ is always SL provided $\cha {\sf k}=0$ or is greater than the socle degree $j_B$ (a result due to J. Brian\c{c}on's standard bases for an ideal $I$ in ${\sf k}\{x,y\}$ : see \cite[Lemma 2.13]{IMM2} for references). In these cases, the free extension $C$ of the standard graded algebra $A={\sf k}[t]/(t^n)$ with fiber $B$ is also SL.  
\end{remark}

\vskip 0.2cm\noindent
{\bf Example.} Take $F=X_1^{[2]}+X_2^{[2]}$, then $I=\Ann F=(x_1x_2,x_1^2-x_2^2)\subset R$; and $A=R/I\cong_{\F} \langle 1,x_1,x_2,x_1^2\rangle$, of Hilbert function $H(A)=(1,2,1)$ and dualizing module
$R\circ F=\langle  1,X_1,X_2, F\rangle$.  If $\cha {\sf k}\not=2$, let $\ell=x_1+x_2$: then we may decompose $A$ as ${\sf k}[\ell]$-module $A=S_1\oplus S_2$, where  $S_1=\{1,\ell,\ell^2\}$ and $S_2=\{x_1-x_2\}$; so the Jordan type partition $P_\ell=(3,1)=H(A)^\vee$, and $\ell$ is strong Lefschetz. When $\cha{\sf k}=2$ then $\ell^2=0$, and $P_\ell=(2,2)$: then $A$ is not strong Lefschetz.
 \section{Free extensions with base or fiber ${\sf k}[t]/(t^n)$.}\label{specialfibersec}
 Let $R={\sf k}[x_1,\ldots,x_r]$, $S=R[t]=R\otimes_{\sf k}\F[t]=\F[x_1,\ldots,x_r,t]$ be graded polynomial rings and denote by $Q_R,Q_S$ their duals.  Fix a homogeneous element $F\in Q_S$ and define the graded Artinian Gorenstein algebra $C=S/\Ann F$.  We look for conditions on $F$ which make $C$ into a free extension with graded Artinian Gorenstein base $A$ and fiber $B$, in which either $A$ or $B$ is $\F[t]/(t^n)$.
 
 In Section \ref{mainsec} we state and prove our main result, Theorem~\ref{thm:Main}, which gives sufficient and weak necessary conditions on the coefficients in $Q_R$ of $F$, for $C$ to be a free extension with base $A=\F[t]/(t^n)$ (and some fiber $B$). We also give examples that illustrate the sharpness, or lack thereof, of the conditions in the Theorem.\par
 In Section \ref{PBIsec} we resume for comparison an ostensibly similar but quite different construction of L. Smith and R.E. Stong of projective bundle ideals \cite{SmSt-PBI}, in which $C$ is a free extension with fiber $B=\F[t]/(t^n)$ (over some base $A$).  In Section \ref{invthsec} we give more examples of free extensions $C$ with base $A={\sf k}[t]/(t^n)$ coming from invariant theory.  Here $A$ will be the ring of relative coinvariants for a pair of finite subgroups $K\subset W\subset\Gl(V)$, and $B,C$ rings of coinvariants for $K$ and $W$ respectively. These examples do not satisfy the sufficiency conditions of Theorem \ref{thm:Main}, and they are typically non-standard graded.

 \subsection{Artinian Gorenstein free extensions with base $A={\sf k}[t](t^n)$.}\label{mainsec}
 Recall our definitions.  Let $A=\F[t]/(t^n)=\F[t]/\Ann T^{[n-1]}$.  Let $R={\F}[x_1,\ldots,x_r]$ be a graded polynomial ring over an arbitrary field, let
 $I_B\subset R$ be a homogeneous ideal of finite colength such that the quotient $B=R/I_B$ is a graded Artinian Gorenstein algebra of socle degree $j_B$. Let $Q_R={\F}[X_1,\ldots,X_r]$ be the dualizing module of $R$ as above, and let $F_B\in (Q_R)_{j_B}$ be a Macaulay dual generator for $B$.  Let $S={\F}[x_1,\ldots,x_r,t]=R[t]$ and $Q_S={\F}_{DP}[X_1,\ldots,X_r,T]$.  

Let $G_0=F_B$ and let $G_i\in Q_R$ for $i=1,2,\ldots ,n-1$ be homogeneous elements of $Q_R$ such that the element $F\in Q_S$ defined by
 \begin{equation}\label{FGieqn}
 F=T^{[n-1]}G_0+T^{[n-2]}G_1+\cdots +T^{[n-1-i]}G_i+\cdots +G_{n-1}  \in  Q_S.
 \end{equation} 
 is homogeneous as well.
The natural inclusion ${\sf k}[t]\rightarrow S$ passes to an inclusion $\iota\colon A\rightarrow C$, and the natural projection $R[t]\rightarrow R$, $x_i\mapsto x_i, \ t\mapsto 0$ passes to a surjective map $\pi\colon C\rightarrow B$. We have a natural sequence
\begin{equation}\label{natseqeq}
\xymatrix{\kk\ar[r] & A\ar[r]^-\iota & C\ar[r]^-\pi & B\ar[r] & \kk}.
\end{equation}
\noindent
{\bf Problem.} Given $G_0=F_B$,
	find necessary and sufficient conditions on the forms $G_1,\ldots, G_{n-1}$ above such that the algebra $C=S/\Ann F$ defined in \eqref{FGieqn}  is a free extension of $A={\sf k}[t]/(t^n)$ with fiber $B=R/\Ann F_B$. Equivalently, so that the sequence \eqref{natseqeq} is coexact and $|C|=\dim_{\sf k}C$ satisfies $|C|=|A|\cdot |B|$.\par
The following Theorem gives a sufficient condition for $C=S/\Ann F$ to be a free extension, but Example \ref{ex:3} and the examples of Section \ref{invthsec} show  it is not necessary.  We also give a weak necessary condition. 
\begin{theorem}
	\label{thm:Main} Given $A ={\F}[t]/(t^n)$, and $ B=R/\Ann F_B$ as above and the dual forms $G_0= F_B$ and $G_1,\ldots ,G_{n-1}\in Q_R$ and $F$ as in \eqref{FGieqn}, define the sequence of nested ideals $I_0\subseteq\cdots\subseteq I_{n-1}\subseteq R$ 
	\begin{equation}\label{Ieqn}
	I_0=\Ann G_0 \text { and }I_i=(I_{i-1}:\Ann G_i) \text { for }i\in [1,n-1].
	\end{equation}
	  Assume that
\begin{equation}\label{mainthmeqn}
I_i\circ G_{n-1-i}\subseteq R\circ F_B, \ \text{for} \ i=0,\ldots,n-1.
\end{equation} 
	Then the sequence \eqref{natseqeq} is coexact, and $C=S/\Ann F$ is a free extension of the base $A$ with fiber $B$.
	\par
	 Assume conversely that $F$ as in  \eqref{FGieqn} defines a free extension of $A$ with fiber $B$. 
	  Then we must have $I_0\circ G_i\subseteq \sum_{j=0}^{i-1}R\circ G_j$ for $i\in [1,n-1]$.

\end{theorem}
In order to prove the theorem we need several Lemmas. 

\begin{lemma}
	\label{lem:AnnFB} The algebra
	$C=S/\Ann F$ of \eqref{FGieqn} is a free extension with base $A$ and fiber $B$ if and only if for every element $g_{n-1}\in\Ann F_B$ there exist elements $g_0,\ldots,g_{n-2}\in R$ such that 
	\begin{equation}\label{grecursiveeq}
	t^{n-1}g_0+\cdots+tg_{n-2}+g_{n-1}\in\Ann F.
	\end{equation}
\end{lemma}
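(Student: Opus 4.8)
The plan is to make the abstract conditions concrete by computing $\Ann_S(F)$ through the contraction action, and then to recognize the lemma's lifting property as the coexactness of the sequence \eqref{natseqeq}. First I would write a general element of $S$ as $g=\sum_{k\ge 0}t^kg_k$ with $g_k\in R$ and compute, using that $t$ contracts only the divided‑power variable $T$ while $R$ acts only on $X_1,\dots,X_r$,
$$g\circ F=\sum_{p=0}^{n-1}\Bigl(\sum_{i+k=n-1-p}g_k\circ G_i\Bigr)T^{[p]}.$$
Hence $g\in\Ann_S(F)$ if and only if $\sum_{i+k=m}g_k\circ G_i=0$ for each $m=0,\dots,n-1$, a triangular system whose $m=0$ equation reads $g_0\circ G_0=g_0\circ F_B=0$. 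Two consequences follow at once: $t^n\circ F=0$, so $t^nS\subseteq\Ann_S(F)$ and one may always reduce a prospective annihilator modulo $t^n$; and the projection $S\to R$, $t\mapsto 0$, carries $\Ann_S(F)$ into $\Ann_R(F_B)$, so $\pi\colon C\to B$ is well defined and visibly surjective.

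Next I would translate membership in $(t)C=(\iota(\mathfrak m_A))C$. For a class $[h]\in C$ with $h=\sum_k t^k h_k$, one has $[h]\in(t)C$ exactly when there is $w\in\Ann_S(F)$ with $w-h\in tS$, that is, with constant term $w_0=h_0$: given such $w$ one writes $h-w=th'$ and gets $[h]=t[h']$, and the converse is immediate. Since $\pi([h])=0$ iff $h_0\in\Ann_R(F_B)$, and the inclusion $(t)C\subseteq\ker\pi$ is automatic, coexactness $\ker\pi=(t)C$ amounts to the statement that every $a_0\in\Ann_R(F_B)$ occurs as the constant term $w_0$ of some $w\in\Ann_S(F)$. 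Reducing modulo $t^n$ lets me take $w=\sum_{k=0}^{n-1}t^kw_k$, and renaming $w_0=g_{n-1},\,w_1=g_{n-2},\dots,w_{n-1}=g_0$ turns this into precisely condition \eqref{grecursiveeq}. Thus the displayed lifting property is equivalent to coexactness of \eqref{natseqeq}.

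It then remains to upgrade coexactness to the full free‑extension property, and to record the converse. The converse is direct: by Definition \ref{freeextdef} (equivalently Lemma \ref{coexactfreelem}) a free extension satisfies $\ker\pi=(\iota(\mathfrak m_A))C$, which is coexactness, hence \eqref{grecursiveeq} holds. For the forward direction I would invoke Lemma \ref{lem:SS}. The algebras $A$, $B$, and $C=S/\Ann F$ are all Poincar\'e duality (Gorenstein) algebras by Lemma \ref{lem:GorensteinMDPD}, with socle degrees $j_A=n-1$, $j_B$, and $j_C=\deg F=n-1+j_B=j_A+j_B$. Therefore coexactness forces $C$ to be a free $A$‑module, and then Lemma \ref{coexactfreelem} gives that $C$ is a free extension of $A$ with fiber $B$.

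The one place where more than bookkeeping is used — and what I expect to be the main obstacle — is exactly this last step. Coexactness by itself is \emph{not} sufficient for a free extension in general; here it is the additivity of socle degrees $j_C=j_A+j_B$ together with the Poincar\'e duality of all three algebras (Lemma \ref{lem:SS}) that supplies the freeness of $C$ over $A$ automatically, so that no separate dimension count $\dim_{\kk}C=\dim_{\kk}A\cdot\dim_{\kk}B$ needs to be verified. Keeping the contraction computation of the first paragraph correctly aligned with the indexing of \eqref{grecursiveeq} (the reversal $w_k=g_{n-1-k}$) is the only other point demanding care.
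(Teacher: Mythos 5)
Your proposal is correct and follows essentially the same route as the paper: reduce the free-extension property to coexactness $\ker\pi=(t)\cdot C$ via Lemma \ref{lem:SS} (using $j_C=j_A+j_B$ and the Poincar\'e duality of $A$, $B$, $C$), and then translate coexactness into the condition that every element of $\Ann_R(F_B)$ is the constant term of some element of $\Ann_S(F)$, which after the index reversal is exactly \eqref{grecursiveeq}. Your version merely makes explicit the contraction computation and the reduction modulo $t^n$ that the paper leaves implicit.
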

\begin{proof}
	Since the condition of Lemma \ref{lem:SS} on socle degrees is satisfied,  $C$ is a free extension of $A$ if and only if $\ker(\pi)=(t)\cdot C$ which is equivalent to the requirement that every element $\bar{g}\in\ker(\pi)$ has a representative $g\in (t)\cdot R\subset R$; this is in turn equivalent to the given condition.
\end{proof}\par
Note that the condition that $g=t^{n-1}g_0+\cdots+g_{n-1}\in\Ann F$ of Equation \eqref{grecursiveeq} is equivalent to the condition that 
\begin{equation}
\label{eq:Jennie}
\sum_{k=0}^{n-1-i} g_{k+i}\circ G_{k}=0, \ \ \ i=0,\ldots,n-2
\end{equation}
\begin{lemma}
	\label{lem:Id} 
	Fix an integer $d\in [0,n-1]$, and fix an element $a\in R$.  If $a\circ G_0=\sum_{k=1}^db_k\circ G_k$ for some elements $b_k\in R$, then $a\in I_d$ of Equation \eqref{Ieqn}. 
\end{lemma}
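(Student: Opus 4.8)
The plan is to argue by induction on $d$, using the recursive definition $I_d=(I_{d-1}:\Ann(G_d))$ together with the fact that the contraction action of $R$ on $Q_R$ is a module action of the \emph{commutative} ring $R$, so that $f\circ(g\circ F)=(fg)\circ F=g\circ(f\circ F)$ for all $f,g\in R$ and $F\in Q_R$.

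For the base case $d=0$ the hypothesis $a\circ G_0=\sum_{k=1}^{0}b_k\circ G_k$ involves an empty sum, hence reads $a\circ G_0=0$; this says precisely that $a\in\Ann(G_0)=I_0$. For the inductive step I would assume the statement for $d-1$ and suppose $a\circ G_0=\sum_{k=1}^{d}b_k\circ G_k$. By the colon-ideal definition, $a\in I_d=(I_{d-1}:\Ann(G_d))$ is equivalent to $ac\in I_{d-1}$ for every $c\in\Ann(G_d)$, so I fix such a $c$ and apply it to both sides of the relation. Using commutativity of the action one gets $(ac)\circ G_0=c\circ(a\circ G_0)=\sum_{k=1}^{d}(cb_k)\circ G_k$. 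The top summand vanishes, since $(cb_d)\circ G_d=b_d\circ(c\circ G_d)=b_d\circ 0=0$, leaving $(ac)\circ G_0=\sum_{k=1}^{d-1}(cb_k)\circ G_k$. This exhibits $ac$ (with coefficients $cb_k\in R$) as satisfying the hypothesis of the lemma at level $d-1$, so the inductive hypothesis yields $ac\in I_{d-1}$. Since $c\in\Ann(G_d)$ was arbitrary, $a\cdot\Ann(G_d)\subseteq I_{d-1}$, which is exactly $a\in I_d$.

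The argument is essentially formal: the one idea is that applying an element $c\in\Ann(G_d)$ kills precisely the highest-index summand while preserving the shape of the relation with one fewer term, which is what feeds the induction. I do not expect a serious obstacle here; the only points needing care are the bookkeeping of the colon ideal, so that the conclusion ``$a\cdot\Ann(G_d)\subseteq I_{d-1}$'' is literally the membership $a\in I_d$ being sought, and the observation that each product $cb_k$ lies in $R$, so the inductive hypothesis at level $d-1$ genuinely applies.
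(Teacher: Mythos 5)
Your proof is correct and is essentially the same as the paper's: both argue by induction on $d$, applying an arbitrary element of $\Ann(G_d)$ to kill the top summand and invoking the inductive hypothesis to conclude $a\cdot\Ann(G_d)\subseteq I_{d-1}$, i.e.\ $a\in I_d$. The only difference is cosmetic indexing (you step from $d-1$ to $d$ where the paper steps from $d$ to $d+1$).
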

\begin{proof}
	By induction on $d$. If $d=0$ this states that $a\circ G_0=0$ implies that $a\in I_0=\Ann G_0$.  Assume the implication holds for an integer $d<n-1$, and suppose that $a\circ G_0=\sum_{k=1}^{d+1}b_k\circ G_k$ for some set of $b_k\in R$.  Fix $y\in\Ann G_{d+1}$.  Then $(y\cdot a)\circ G_0=\sum_{k=1}^{d}(y\cdot b_k)\circ G_k$ (note $y$ is $0$ on the $G_{d+1}$-term).  By the induction hypothesis, this implies that $y\cdot a\in I_{d}$.  Since this holds for every $y\in\Ann G_{d+1}$, we must have $a\in I_{d+1}=(I_{d}:\Ann G_{d+1})$.
\end{proof}\par
The converse to Lemma \ref{lem:Id} is false, see Examples \ref{ex:3} and \ref{ex:4}.

\begin{proof}[Proof of Theorem \ref{thm:Main}] We first assume the conditions \eqref{mainthmeqn} and will show that they suffice for $C$ to be a free extension.
	By Lemma \ref{lem:AnnFB}, it suffices to show that for each fixed $g_{n-1}\in\Ann(G_0=F_B)$, the Equation \eqref{eq:Jennie} has a solution $g_0,\ldots,g_{n-2}$.  We solve the Equation \eqref{eq:Jennie} downward inductively for $i\in [0,n-2]$. For $i=n-2$, we have the equation $g_{n-2}\circ G_0+g_{n-1}\circ G_1=0$.  Note that $g_{n-1}\in I_0\subseteq I_{n-2}$ hence by our condition $I_{n-2}\circ G_1\subseteq R\circ F_B=R\circ G_0$ we conclude that $g_{n-2}$ exists.  Moreover, by Lemma~\ref{lem:Id} we must also have $g_{n-2}\in I_1=(\Ann G_0:\Ann G_1)$.  Inductively, assume that we have found solutions $g_{i+1},\ldots,g_{n-2}$ with $g_k\in I_{n-1-k}$ for each $k\ge i+1$.  Then the $i$-th equation \eqref{eq:Jennie} for $i$ yields
	\[-g_i\circ F_B=g_{i+1}\circ G_1+\cdots+g_{n-1}\circ G_{n-1-i}.\]
	Since $g_{k+i}\in I_{n-1-k-i}\subseteq I_{n-1-k}$, and our assumption is $I_{n-1-k}\circ G_k\subseteq R\circ F_B$, we see that $g_i$ also exists, and by Lemma \ref{lem:Id} we have $g_i\in I_{n-1-i}$.
	This completes the induction step and the proof of the forward implication.\par
	 Assume conversely that $F$ as in  \eqref{FGieqn} defines a free extension of $A$ with fiber $B$. 
	  Then by Lemma \ref{lem:AnnFB} and  Equation \ref{eq:Jennie} we must have $I_0\circ G_i\subseteq \sum_{j=0}^{i-1}R\circ G_j$ for $i\in [1,n-1]$. This completes the proof of the Theorem.
\end{proof}\par
We obtain the following corollary, which is \cite[Lemma 3.7]{IMM1}.
\begin{corollary}\label{specialdualgen-thm}
Let $A=\F[t]/(t^n)$, $B=R/I_B$, $I_B=\Ann F_B$ be as in Theorem \ref{thm:Main} and  let $G\in Q_R$ be a homogeneous polynomial such that the element $F\in Q_S$ defined by
\[
F=T^{[n-1]}\cdot F_B+G
\]
is homogeneous.
Then $F$ is the Macaulay dual generator of a free extension $C=S/\Ann F$ with base $A$ and fiber $B$ if and only if $I_B^{\,2}\circ G = 0$.
\end{corollary}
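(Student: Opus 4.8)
The plan is to deduce the Corollary as the special case of Theorem \ref{thm:Main} in which all the intermediate coefficients vanish. Since a Macaulay dual generator of a graded Gorenstein algebra must be homogeneous, the two summands of $F=T^{[n-1]}F_B+G$ must share a degree, which forces $\deg G=j_B+(n-1)$; hence $m=n-1$ and $F$ is precisely the form \eqref{FGieqn} with $G_0=F_B$, $G_{n-1}=G$, and $G_1=\cdots=G_{n-2}=0$. This is the identification I would make at the outset.

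First I would compute the nested ideals \eqref{Ieqn} for this choice of coefficients. Because $\Ann(0)=R$ and $(J:R)=J$ for any ideal $J$, each step involving a vanishing coefficient leaves the ideal unchanged, so $I_0=I_1=\cdots=I_{n-2}=\Ann(F_B)=I_B$, while $I_{n-1}=(I_B:\Ann G)$. Feeding this into the sufficiency hypothesis \eqref{mainthmeqn}, I would observe that the inclusions for $i=1,\ldots,n-2$ read $I_i\circ 0\subseteq R\circ F_B$ and the inclusion for $i=n-1$ reads $I_{n-1}\circ F_B\subseteq R\circ F_B$, both of which hold automatically (the latter since $I_{n-1}\subseteq R$). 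The only surviving condition is the case $i=0$, namely $I_B\circ G\subseteq R\circ F_B$.

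The technical lever is then the translation of this containment into the quadratic condition of the statement. Using Gorenstein Macaulay duality (Lemma \ref{Mac1lem} applied to the cyclic module $R\circ F_B$) I would identify $R\circ F_B=(I_B)^\perp=\{H\in Q_R\mid I_B\circ H=0\}$. An element $g\circ G$ with $g\in I_B$ therefore lies in $R\circ F_B$ precisely when it is annihilated by $I_B$, i.e.\ when $I_B\circ(g\circ G)=0$; since contraction is associative and $(I_B)^2$ is spanned by products $g'g$ with $g,g'\in I_B$, the inclusion $I_B\circ G\subseteq R\circ F_B$ is equivalent to $(I_B)^2\circ G=0$. This yields the sufficiency.

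For necessity I would invoke the weak converse already recorded in Theorem \ref{thm:Main}: a free extension forces $I_0\circ G_i\subseteq\sum_{j=0}^{i-1}R\circ G_j$ for each $i\in[1,n-1]$. With the present coefficients every case $i\le n-2$ is vacuous, and the case $i=n-1$ collapses (because $G_1=\cdots=G_{n-2}=0$, so the right-hand side is just $R\circ F_B$) to exactly $I_B\circ G\subseteq R\circ F_B$, the same containment shown above to be equivalent to $(I_B)^2\circ G=0$. The point worth emphasizing is that for this degenerate family the generally weak necessary condition of Theorem \ref{thm:Main} happens to coincide with the sufficient condition, so the two implications meet and close the equivalence; there is no genuine obstacle beyond the duality identity $R\circ F_B=(I_B)^\perp$, which does all the real work and recovers \cite[Theorem 2.25]{IMM}.
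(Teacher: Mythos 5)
Your proof is correct. The sufficiency half follows the paper's route exactly: both of you specialize Theorem \ref{thm:Main} by noting that $G_1=\cdots=G_{n-2}=0$ forces $I_0=\cdots=I_{n-2}=I_B$ in \eqref{Ieqn}, that the only nonvacuous instance of \eqref{mainthmeqn} is $I_B\circ G\subseteq R\circ F_B$, and that this containment is equivalent to $(I_B)^2\circ G=0$ via $R\circ F_B=(I_B)^\perp$; the paper leaves the last two steps implicit (the $n=2$ case of the equivalence is recorded only in the Remarks that follow), so spelling them out is a service to the reader, and your observation that homogeneity forces $m=n-1$ quietly repairs a small notational mismatch in the statement. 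Where you genuinely diverge is the necessity direction. The paper does not invoke the converse clause of Theorem \ref{thm:Main}; it reruns the argument of \cite[Theorem 2.25]{IMM} from scratch, using $\ker\pi=(\iota(A)_+)C=tC$ to write each ${\sf f}_0\in I_B$ as $th$ modulo $\Ann F$, extracting the top $t$-degree coefficient $h_0$ of $h$ to get ${\sf f}_0\circ G=h_0\circ F_B$, and then applying $I_B$ once more. You instead quote the weak necessary condition $I_0\circ G_i\subseteq\sum_{j=0}^{i-1}R\circ G_j$ already recorded in Theorem \ref{thm:Main}, observe that only the $i=n-1$ instance survives, and note that it collapses to the same containment $I_B\circ G\subseteq R\circ F_B$. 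Your route is more economical and highlights the pleasant fact that the theorem's generally weak necessary condition becomes sharp in this degenerate case; the paper's route is self-contained and exhibits the explicit witness $h$. Both are valid, and they coincide once one unwinds the proof of the converse clause, which itself rests on Lemma \ref{lem:AnnFB} and equation \eqref{eq:Jennie}.
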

\begin{proof}
Assume first that $I_B^2\circ G=0$.  Then $I_B\circ G\subseteq R\circ F_B$ and this is exactly Condition \eqref{mainthmeqn} of Theorem \ref{thm:Main}; here $G_0=F_B$, $G_1=\cdots = G_{n-2}=0$, $G_{n-1}=G$, and $I_1=\cdots=I_{n-2}=I_0=I_B=\Ann F_B$.  Hence by Theorem \ref{thm:Main} it follows that $C=S/\Ann F$ is a free extension over $A=\F[t]/(t^n)$ with fiber $B=R/\Ann F_B$.

Conversely assume that $C=S/\Ann F$ is a free extension over $A=\F[t]/(t^n)$ with fiber $B=R/I_B$ where $I_B=\Ann F_B$.  Then by Theorem \ref{thm:Main} we must have  $I_0\circ G_i\subseteq\sum_{j=0}^{i-1}R\circ G_j$ for each $i=0,\ldots,n-1$, and hence for $i=n-1$ we have $I_B\circ G\subseteq R\circ F_B$, and hence $I_B^2\circ G=0$ as desired.
\end{proof}
\begin{remarks*}
	\begin{enumerate}[i.]
		\item The $i=n-1$ part of our condition \ref{mainthmeqn} in Theorem \ref{thm:Main} is vacuous:  it states $I_{n-1}\circ G_0\subseteq R\circ F_B$ where $G_0=F_B$.  In particular, in the case, $n=2$ our condition is $I_0\circ G_1\subseteq R\circ F_B$ or $I_0^{\,2}\circ G_1= 0$.
		
		\item If $\Ann G_i\subseteq I_{i-1}$, then $I_i=(I_{i-1}:\Ann G_i)=R$, and hence also $I_{i+1}=R,\ldots,I_{n-1}=R$.  In particular, if $\Ann G_1\subset\Ann F_B$, then $I_0=\Ann F_B$ and $I_i=R$ for $i>0$, and our conditions in Theorem \ref{thm:Main} are that $G_1,\ldots,G_{n-2}$ are each derivatives of $F_B$ which is impossible since their respective degrees must also each be strictly larger than that of $F_B$ (for $F$ to be homogeneous).
	\end{enumerate}
\end{remarks*}

Recall that a Gorenstein algebra $A=R/I$ of socle degree $j$ is \emph{compressed} if $A$ has the termwise maximum possible Hilbert function $H$, given the socle degree $j=j_A$ and embedding dimension $r$. Letting $ r_i= \dim_{\sf k}R_i$ this is equivalent to
\begin{equation}\label{compressedeq}
\text {$A$ is compressed $\Leftrightarrow$ for $i$ satisfying }1\le i\le j,\,\dim_{\sf k}A_i=\min\{r_i, r_{j-i}\}.
\end{equation}

\begin{example}[Examples of free extension as in Theorem \ref{thm:Main}, $n=2,3$]\label{McD2-ex} 
The first two parts (i), (ii), for $n=2$ exemplify the special case of Corollary \ref{specialdualgen-thm}. We explore a weakening of the hypotheses in part (iii) of the Example, and find that $C$ is not a free extension. For $n=3$,  part (iv) shows that $I_B^{\,2}\circ G_i=0$ is not a sufficient condition for freeness. Part (v) assumes the stronger condition of Theorem \ref{thm:Main},  then $C$ is a free extension.  In this example we are always assuming that the grading on $S=\F[x_1,\ldots,x_r,t]$ is standard. 
\begin{enumerate}[(i).]
\item First, let $R={\sf k}[x,y]$, $B={\sf k}[x,y]/J$, $J=(x^2,y^2)=\Ann F_B$, $F_B=XY$, and $B\cong \langle1,x,y, xy \rangle$, $H(B)=(1,2,1)$. Let $F=XYT+X^{[3]}$. Here $G=X^{[3]}$, which is even in $J^\perp$. See Remark \ref{perpendicular-rem} -- when $B$ is compressed of even socle degree, $J^2$ has order $j_B+2$, but $G$ has degree $j_B+1$ so $J^2\circ G=0$ does not restrict $G$. Then, with $S=R[t]$,
\begin{equation}\label{C1eqn}
C=S/I,\quad I=\Ann F=(t^2,ty-x^2,y^2), \quad C\cong_{\sf k} \langle 1,x,y,t,xt,yt,xy,txy\rangle,
\end{equation}
 and $H(C)=(1,3,3,1)$. We let $A={\sf k}[t]/(t^2)$, let $\iota (t)=t\in C$, and define $\pi: C\to B$, $\pi(\alpha t+b)=b$ and consider the section ${\sf s}:B\to C$, ${\sf s}(b)=b$ in the basis above for $C$ of Equation \eqref{C1eqn} (that is ${\sf s}:1\to 1, x\to x,y\to y, xy\to xy$). Then $\mathfrak{m}_A\cdot C=tC=
 \langle t,xt,yt, xyt\rangle=\ker (\pi)$. And we have 
 \[
 C\cong_{\sf k} {\sf s}(B)\oplus t{\sf s}(B)
 \]
as vector spaces, hence $C$ is free over $A$ (this also follows from Lemma \ref{coexactfreelem} since one easily sees that the sequence $\xymatrix{{\sf k}\ar[r] & A\ar[r]^-\iota & C\ar[r]^-\pi & B\ar[r] & {\sf k}}$ is coexact, and 
 $\dim_{\sf k}C=\dim_{\sf k} A\cdot \dim_{\sf k}B$).
 
 \item Next, let $F'=XY^{[4]}T+X^{[3]}Y^{[3]}$, then $F_{B'}=XY^{[4]}$ and $B'=R/J'$, $J'=(x^2,y^5)$ of Hilbert function $H(B')=(1,2,2,2,2,1)$, and $A={\sf k}[t]/(t^2)$. Note that $(J')^2\circ G=(J')^2\circ X^{[3]}Y^{[3]}=0$. We have
 \begin{equation} 
 C'=S/I',\quad I'=\Ann F'=(t^2,ty-x^2,y^5),
 \end{equation}
 We have $H(C')=(1,3,4,4,4,3,1)$ and $\dim_{\sf k}C=20=(\dim_{\sf k}A)\cdot (\dim_{\sf k}B)$, so $C$ is a 
 free extension of $A$, as required by Corollary \ref{specialdualgen-thm}.
\item
 We now try to vary the example to give one without the restriction on $G$ that $(\Ann F_B)^2\circ G=0$. [It does not lead to a free extension]\par
 Let $F''=XY^{[4]}T+X^{[6]}$. Then $B''=R/J'$, $J'=\Ann (XY^{[4]})=(x^2,y^5)$ of Hilbert function  $H(B'')=(1,2,2,2,2,1)$, again $A={\sf k}[t]/(t^2)$. However, 
 \begin{equation}
 C''=S/I'',\quad  I''=(t^2, tx^2,yx^2,ty^4-x^5,y^5),
\end{equation}
and $H(C')=(1,3,5,5,5,3,1)$, again
of length $24$, not $2\cdot 10=(\dim_{\sf k} A)\cdot (\dim_{\sf k}B)$, so $C' $ is not a free extension of $A$.
\item  We now consider $F=XYT^{[2]}+X^{[3]}T+X^{[2]}Y^{[2]}$, which satisfies the simpler condition that $I_B^{\,2}\circ G_i=0$, $i=1,2$; and we find that $C$ is not always a free extension.  Since $B=R/J$, $J=\Ann(XY)=R/(x^2,y^2)$, we have $H(B)=(1,2,1)$,  $A={\sf k}[t]/(t^3)$,  $H(A)=(1,1,1)$ and 
$H(A\otimes B)=(1,2,1,0,0)+(0,1,2,1,0)+(0,0,1,2,1)=(1,3,4,3,1)$ of length 12.
We have 
\begin{equation*}
S_1\circ F=\langle XYT+X^{[3]},YT^{[2]}+X^{[2]}T+XY^{[2]},XT^{[2]}+X^{[2]}Y\rangle,
\end{equation*} 
and it is straightforward to see that $F$ is compressed (as in  \eqref{compressedeq}), of Hilbert function $(1,3,6,3,1)$, that $\dim_{\sf k}S_2\circ F=6$, not $4$, and $\dim_{\sf k}C>(\dim_{\sf k}A)\cdot (\dim_{\sf k}B)$. It follows that $C=S/\Ann F$ is not a free extension of $A$, although $F=T^{[2]}F_B+TG_1+G_2$ with $I_B^{\,2}\circ G_1=0$ and $I_B^{\,2}\circ G_2 =0$.

\item 	We now take $F=T^{[2]}XY+TX^{[3]}+XY^{[3]}$, so $F_B=XY$, $G_1=X^{[3]}$ and $G_2=XY^{[3]}$. Then we have $\Ann(F_B=XY)=(x^2,y^2)$, $\Ann(G_1=X^{[3]})=(x^4,y)$, $\Ann(G_2=XY^{[3]})=(x^2,y^4)$.  The ideals of Equation \eqref{Ieqn} satisfy
 $I_1=(I_0:\Ann G_1)=(x^2,y)$ and $I_2=(I_1:\Ann G_2)=R$.  Then it is easy to see that 
\begin{enumerate}
	\item $I_0\circ G_2=(x^2,y^2)\circ XY^{[3]}\subset R\circ XY$
	\item $I_1\circ G_1=(x^2,y)\circ X^{[3]}\subset R\circ XY$, and 
	\item $I_2\circ G_0=R\circ XY\subseteq R\circ XY$.
\end{enumerate}
Thus the conditions Equation \eqref{mainthmeqn} of Theorem \ref{thm:Main} are satisfied, hence $C=\kk[x,y,t]/\Ann(T^{[2]}XY+TX^{[3]}+XY^{[3]})$ is a free extension over $A=\kk[t]/(t^3)$ with fiber $B=\kk[x,y]/\Ann(XY)=\kk[x,y]/(x^2,y^2)$. Here $H(B)=(1,2,1)$ and $C=S/\Ann F$ where $ \Ann F=(x^2-yt,y^2-t^2,yx^2)$, with $H(C)=(1,3,4,3,1)=H(A)\otimes H(B).$\footnote{Here for integer sequences $H(A)=(a_1,\ldots,a_m)$ and $H(B)=(b_1,\ldots,b_n)$ we use the notation $H(A)\otimes H(B)$ to mean the integer sequence $(c_1,\ldots,c_{n+m})$ defined by $c_i=\sum_{k=0}^ia_kb_{i-k}$.}
\end{enumerate}
\end{example}
\begin{example}
 We now give several examples of free extensions where $A$ and $C$ are not standard-graded, and a last
 example where $B$ and $C$ are not-standard graded.
First, let $F=TXY+X^{[3]}Y,  S={\sf k}[x,y,t]$ of weights ${\sf w}(x,y,t)=(1,1,2)$. Here $A={\sf k}[t]/(t^2)$ of Hilbert function $H(A)=(1,0,1)$, $B=R/I_B$, $I_B=(x^2,y^2)$, of Hilbert function $H(B)=(1,2,1)$ and
the free extension $C=S/\Ann F$, $\Ann F=(y^2, T-x^2, T^2)$, of (non-standard) Hilbert function 
$H(C)=H(A)\otimes H(B)=(1,2,2,2,1)$. Here $I_B^{\,2}\circ G=(x^4,x^2y^2,y^4)\circ G=0$, so the hypotheses of
Corollary \ref{specialdualgen-thm} are satisfied.\par
Second, let $F=TX^{[2]}Y^{[2]}+X^{[5]}Y$, $I_F=\Ann F=(t^2, y^3, ty-x^3)$, $B=R/I_B$, $I_B=(x^3,y^3)$ of Hilbert function $H(B)=(1,2,3,2,1)$. Then $I_B^{\,2}\circ X^{[5]}Y=0$ and $C$ is a free extension of $A$ with fiber $B$ and Hilbert function $H(C)=H(A)\otimes H(B)=(1,2,4,4,4,2,1)$.\par
Third, let $F=TX^{[2[}Y+X^{[5]}Y$, $S={\sf k}[x,y,t]$ of weights ${\sf w}(x,y,t)=(1,1,3)$, $A={\sf k}[t]/(t^2)$ of Hilbert function $H(A)=(1,0,0,1)$.  Here $B=R/I_B$, $I_B=(x^3,y^2)$ and $I_B^{\,2}\circ X^{[5]}Y=0$.
So $C=S/\Ann F$, $\Ann F=(y^2,t-x^3,t^2)$, is a free extension of $A$ having Hilbert function $H(C)=H(A)\otimes H(B)=(1,0,0,1)\otimes (1,2,2,1)=(1,2,2,2,2,2,1)$.\par
Fourth, let $F=TX^{[2]}Y$, and ${\sf w}(t)=4$ and $C=A\otimes B$; then $C$ has non-unimodal Hilbert function $H(A)\otimes H(B)=(1,2,2,1,1,2,2,1)$. Here $G_1\in Q_7$ must be zero to satisfy $I_B^{\,2}\circ G_1=0$.  Note that if ${\sf w}(t)<4$ in this example, then $I_B^2\circ G_1=0$ does indeed have non-trivial solutions in $Q_{3+{\sf w}(t)}$.  In general, one can show that if ${\sf w}(t)=1$ that $I_B^2\circ G=0$ always has non-trivial solutions in $Q_{f_B+1}$; see Remark~\ref{perpendicular-rem}.  \par
Finally let $F=T^{[2]}X^{[2]}+X^{[3]}, C = {\sf k}[t,y]/(t^3,x^3-t^2x^2)$ of weights $ {\sf w}(t,x) = (1,2)$,  of non-unimodal Hilbert function $H(C)=(1,1,2,1,2,1,1)$. Here $C$ is a free extension of
$A={\sf k}[t]/(t^3)$ of Hilbert function $H(A)=(1,1,1)$, with non-standard graded fiber $B= k[x]/(x^3)$ of Hilbert function $H(B)=(1,0,1,0,1).$ 
\end{example}\vskip 0.2cm\noindent 

The next example shows again as in Example \ref{McD2-ex}(iv) that the simpler conditions
\begin{equation}\label{I2eqn}
I_B^{\,2}\circ G_i\subseteq R\circ F_B, \ \text{for} \ i=1,\ldots,n-1,
\end{equation}
which are implied by \eqref{mainthmeqn}, are not sufficient to guarantee that $C$ will be a free extension , when $n\ge 3$. The following three examples are standard-graded.
\begin{example}[$I_B^{\,2}\circ G_i=0$ for all $i$ is not sufficent to make $C$ a free extension]
	\label{ex:1}
	Let  $R=\kk[x,y]$, $Q=\kk[X,Y]$, and set $F_B=X^{[2]}$, $G_1=X^{[2]}Y$, and $G_2=0$ so that 
	\[F=T^{[2]}X^{[2]}+TX^{[2]}Y\]
	and $C=S/\Ann F$, with $\Ann F=(t^2-ty,y^2,x^3)$, $A=\kk[t]/(t^3)$ (so $n=3$), and $B=R/\Ann F_B$.
	Then $I_0=\Ann F_B=(x^3,y)$ and $\Ann G_1=(y^2,x^3)$. So
	\[I_1=(\Ann F_B:\Ann G_1)=R=\kk[x,y].\]
	Here the conditions of Theorem \ref{thm:Main} are not satisfied since $I_1\circ G_1=R\circ X^{[2]}Y\not\subset R\circ F_B=R\circ X^{[2]}$.  Also, 
	\[I_0\circ G_2=0\subset R\circ F_B, \ \text{and} \ I_1\circ G_1=(x^3,y)\circ X^{[2]}Y\subset R\circ F_B=R\circ X^{[2]}.\] 
	In other words, $I_0^{\,2}\circ G_i\equiv 0$ for $i=1,2$.  However, $C$ is not a free extension of $A$ with fiber $B$: we have $A\otimes_\kk B\cong \kk[t,x,y]/(t^3,x^3,y)\cong \kk[t,x]/(t^3,x^3)$ has Hilbert function $H(A\otimes_\kk B)=(1,2,3,2,1)$ of length $9$.  But $C=\kk[x,y,t]/\Ann(T^2X^2+TX^2Y)$ has Hilbert function $H(C)=(1,3,4,3,1)$ of length $12$.  Alternatively, we can appeal to Lemma \ref{lem:AnnFB} by showing that for $y\in\Ann(F_B=X^{[2]})$ there is no element $g=t^2g_0+tg_1+y\in\Ann(F=T^{[2]}X^{[2]}+TX^{[2]}Y)$.  Indeed, note that for degree reasons, we must have $g_0=0$ and $g_1=c$ (a constant).  Then Equation \eqref{eq:Jennie} gives
	\begin{equation}
	\label{eq:ExSyst}
	\begin{array}{rl}
	c\circ X^{[2]}Y= & 0\\
	c\circ X^{[2]}+y\circ X^{[2]}Y= & 0\\
	\end{array}
	\end{equation}
	which clearly has no solution.  Hence Lemma \ref{lem:AnnFB} implies that $C$ cannot be a free extension of $A$ with fiber $B$.   
\end{example}

The next example shows that the conditions \eqref{mainthmeqn} of Theorem \ref{thm:Main} are not \emph{necessary}, for $C=S/\Ann F$ as in \eqref{FGieqn} to be a free extension of $A$ with fiber $B$.

\begin{example}
	\label{ex:3}
	Similar setup to Example \ref{ex:1}.  Set $F_B=X-Y$, $G_1=Y^{[2]}-X^{[2]}$, and $G_2=X^{[2]}Y-XY^{[2]}$ so that
\[
F=T^{[2]}(X-Y)+T(Y^{[2]}-X^{[2]})+X^{[2]}Y-XY^{[2]}.
\]
Then $I_0=\Ann(F_B=X-Y)=(x+y,xy)$, $\Ann(G_1=Y^{[2]}-X^{[2]})=(xy,x^2+y^2)$, and $\Ann(G_2=X^{[2]}Y-XY^{[2]})=(x^3,y^3,x^2y+xy^2,x^2y^2)$.  Note that in this case, we have $\Ann G_1\subset\Ann F_B$ hence $I_j=R$ for $j\geq 1$.  Therefore the (sufficient) conditions of Equation \ref{mainthmeqn}
	in Theorem~\ref{thm:Main} cannot be satisfied, as in Remarks (ii) above, after Corollary \ref{specialdualgen-thm}.\par
	On the other hand, we can verify that $C=\kk[x,y,t]/\Ann F$ is a free extension over $A=\kk[t]/(t^3)$ with fiber $B=\kk[x,y]/\Ann F_B=\kk[x,y]/(x+y,xy)$ (in fact, referring to Example \ref{g11nex}, we see that $C$ is the coinvariant ring of $\mathfrak{S}_3$ fibering over the coinvariant ring $B$ of $\mathfrak{S}_2$).  We may check the freeness of $C$ directly by showing that Equation \eqref{eq:Jennie} has a solution for $g_{n-1}=x+y$ and $g_{n-1}=xy$.  The reader can check that $h_1=t+(x+y)\in\Ann F$ as well as $h_2=-t(x+y)+xy\in\Ann F$, hence Lemma \ref{lem:AnnFB} implies that $C$ is a free extension with base $A$ and fiber $B$.  Here $H(A)=(1,1,1)$, $H(B)=(1,1)$ and $H(C)=(1,2,2,1)$; the ideal $I_C=\Ann F=(x+y,xy-t^2 t^3)$.
Note that $I_B^{\,2}\circ G_2=\langle 1,X-Y\rangle\not=0$.
\end{example}

Here is another example of a free extension $C$  of $A={\sf k}[t]/(t^n)$,  but not satisfying the hypothesis of Theorem \ref{thm:Main} Equation \eqref{mainthmeqn}; it is a variation of Example \ref{ex:1}.
\begin{example}
	\label{ex:4}
	Same setup as in Example \ref{ex:1}, except we take $G_2$ so that Equation~\eqref{eq:ExSyst} has a solution.  To achieve this, we can choose $G_2=X^{[2]}Y^{[2]}$ and
$F=T^{[2]}X^{[2]}+TX^{[2]}Y+X^{[2]}Y^{[2]}.$ The conditions in Theorem \ref{thm:Main} are not satisfied.  On the other hand, the elements $h_y=-t+y\in\Ann F$ and $h_{x^3}=t^2x-txy+x^3\in\Ann F$.  Hence by Lemma \ref{lem:AnnFB}, $C$ must be a free extension of $A$ with fiber $B$.
\end{example} 

\begin{remark}\label{perpendicular-rem}
When $n=2$ so $F=TF_B+G$ the condition on $G$ of Corollary~\ref{specialdualgen-thm} is that  $I_B^{\,2}\circ G=0$. Given the AG algebra $B$ of socle degree $j_B=d$ defined by the Gorenstein ideal $I_B$, such elements $G\in (Q_R)_{d+1}$ can always be found. Recall that the tangent space to the punctual Hilbert scheme at an Artinian local algebra $B=R/I_B$ is $\Hom (I_B, B)$, and that when $B$ is Gorenstein, there is a degree reversing $B$-module isomorphism
\begin{equation}
 \nu:\Hom (I_B,B)\cong_{\F} I_B/I_B^{\,2},
 \end{equation}
  with $\Hom(I_B,B)_0\cong (I_B)_{d}/(I_B^{\,2})_d$.  Since for $i>d$ we have $(I_B)_i=R_i$ we have
\begin{equation}
R_i/(I_B^{\,2})_{i}\cong_{\F}(I_B)_i/(I_B^2)_i\cong \Hom(I_B,B)_{d-i}.
\end{equation}
Thus we have, for the dimension of the vector space of $G$,
\begin{align} 
\dim_{\F}\langle\{G\in R_{d+1}\mid I_B^{\,2}\circ G=0\}\rangle\notag
&=\dim_{\F} \langle ((I_B^{\,2})_{d+1})^\perp\cap R_{d+1}\rangle\\&=\dim_{\F} \Hom(I_B,B)_{-1}.\label{G3eqn}
\end{align}
 Some calculations of the dimension in \eqref{G3eqn} are in \cite[\S 6.2]{IK}, in particular \cite[Lemma 6.21]{IK}. This negative one component of the tangent space contains the  trivial degree $-1$ tangents arising from the $r$ partials 
 $\{\partial/\partial x_1,\ldots ,\partial/\partial x_r\}$ mapping $I_B$ to $B$.  When these are the only $-1$ tangents, then $B$ is not only non-smoothable, but is a generic point of a non-smoothable component of the Hilbert scheme $\Hilb^s(\mathbb A^r)$ where $s=\dim_{\F}B$ (these are called ``elementary'' components). Otherwise the space of possible $G$ has larger dimension than $r$.	  
																	  \end{remark}
																  
	Finally, we recall and develop further an example of U. Perazzo (see \cite[Example 3.8]{IMM1}).
\begin{example} 
	\label{ex:Junzo}
Let $R={\sf k}[x,y,z,u,v]$, $S=R[t]$ be standard graded polynomial algebras with dual algebras $Q_R$ and $Q_S$, and set $A=\F[t]/(t^2)$.  Let $F_B=XU^{[2]}+YUV+ZV^{[2]}$, $I_B=\Ann F_B$, and $B=R/I_B$; $j_B=3$ here.  Then $(I_B)_2=({\sf k}[x,y,z]_2,W)$ where $W=\langle xu-yv=w_1,yu-zv=w_2,xv=w_3,zu=w_4\rangle$, and 
$$(I_B^2)_4 ={\sf k}[x,y,z]_4+{\sf k}[W]_2+W\cdot {\sf k}[x,y,z]_2,$$ of dimension $15+10+\dim (W\cdot  {\sf k}[x,y,z]_2)\le 15+10+24=49$. Since $\dim_{\sf k} R_4=70$ the $\F$-vector space of $G\in (Q_R)_4$ satisfying $I_B^2\circ G=0$ is at least $21$ dimensional.  A check in Macaulay 2 shows that $\dim (W\cdot  {\sf k}[x,y,z]_2)=21$, not $24$, so the space of solutions leads to a 24-dimensional family of free extensions of $A$, with fiber $B$, that include $G=XU^{[2]}+YUV+ZV^{[2]}$.
Setting $F=T\cdot F_B+G$, we see by Corollary \ref{specialdualgen-thm} that $C=S/\Ann F$ is a (standard graded) free extension over $A=\F[t]/(t^2)$ with fiber $B=R/I_B$; their Hilbert functions are $H(A)=(1,1)$, $H(B)=(1,5,5,1)$ and $H(C)=(1,6,10,6,1)$.  Moreover one can readily check that the first Hessian determinant of $F$ does not vanish, i.e. $\operatorname{Hess}_1(F)\neq 0$, while the first Hessian determinant of $F_B$ does vanish, i.e. $\operatorname{Hess}_1(F_B)=0$.  So by the Hessian criterion for the strong Lefschetz property, e.g. \cite[Theorem 3.76]{H-W}, we see that the free extension $C$ is strong Lefschetz, while its fiber $B$ is not.  Since $B$ has a symmetric Hilbert function, this implies also that $B$ does not have an element of strong Lefschetz Jordan type.  The polynomial $F_B$ and its vanishing Hessian were discovered by U. Perazzo in 1900.\footnote{U. Perazzo, Sulle variet\'{a} cubiche la cui hessiana svanisce identicamente. G. Mat. Battaglini
38, 337--354 (1900).}   \vskip 0.2cm
When $F_B\in R_3$ is instead generic, then $\dim ( I_B)_2=10$ implies that $(I_B^2)_4$ has dimension at most 55: ten for the squares of a basis of $(I_B)_2$, and $\binom{10}{2}=45$ for the other products of basis elements. A check in Macaulay 2 shows $\dim (I_B^2)_4=54$, Thus, the solutions to $\dim((I_B)^2_4)^\perp=16$ and the family of free extensions of $A={\sf k}[t]/(t^2)$ with fiber a fixed $B=R/I_B$ where $ I_B=\Ann F_B$ are $16$ dimensional.
\end{example}
\subsection{Projective bundle ideals: free extensions with fiber ${\F}[t]/(t^{k+1})$.}\label{PBIsec}
We compare with ``Projective bundle ideals'' defined by L. Smith and R. E. Stong \cite{SmSt-PBI} where the fiber $B={\sf k}[t]/(t^{k+1})$ and $A=R/\Ann F_B$, the reverse of our assumptions.  They give \cite[p. 611, equation $\circledast$]{SmSt-PBI} the coexact sequence
\[
{\sf k}\to A={\sf k}[V]/J\to {\sf k}[V,t]/I\to {\sf k}[t]/(t^{k+1})\to {\sf k}.
\]
They show \cite[Lemma 1.1]{SmSt-PBI} that $C={\sf k}[V,t]/I$ is a free $A={\sf k}[V]/J$ module (here $J=I\cap {\sf k}[t]$) with basis $\{1,t,\ldots ,t^{k}\}$, so $C$ is a free $A$-module and it is a free extension of $A$ having fiber dimension $k+1$ in the sense of T. Harima and J.~Watanabe; here the fiber is the cohomology ring of $\mathbb P^k$.\par
Note that the kernel of $\iota: A \to C$ should be a principal ideal in the socle of $A$ (this explains the ${\sf k}\to A$). In \cite{SmSt-PBI} for $k=1$, the analogous kernel is the ideal generated by $t^2+\alpha_1t+\alpha_2$. We interpret the dual generator of $C$ from \cite[Theorem 2.5]{SmSt-PBI} in our notation. Let $d=j_A$, and let $\hat{h}(t)=t^d+\hat{h}_1t^{d-1}+\cdots+  \hat{h}_d$ the dualizing form in the Poincar\'{e} duality algebra $C$ itself; then the Macaulay dual generator of $C$ is (in the language of \cite{SmSt-PBI} where the dual generator is written in terms of negative powers of the dual variables)
\[
\theta_I =\overline{h}(T)\circ ({\theta}_J\cdot T^{-(d+k)})={\theta}_JT^{-k} +(\overline{h}_1\circ {\theta}_J)T^{-(k+1)} +\cdots +(\overline{h}_d \circ {\theta}_J)T^{-(k+d)}.
\]
In our notation this corresponds to
\[F=F_BT^{[k]}+h_1\circ F_BT^{[k+1]}+h_2\circ F_BT^{[k+2]}+\cdots +h_d\circ F_BT^{[k+d]} .
\] 
We give two examples the first from \cite{SmSt-PBI}, the second ``related to'' our Example~\ref{t2freeextex}.
\vskip 0.2cm

\begin{example}[Projective Bundle Ideal \cite{SmSt-PBI}]\label{PBI-ex}
Translating their \cite[Theorem~2.5]{SmSt-PBI} to our notation for dual generator (in particular we replace their $X$ by $T$) we take 
\[
F=\theta_I=(X^{[2]}+Y^{[2]})T+YT^{[2]}+T^{[3]}, \,\quad \theta_J=X^{[2]}+Y^{[2]}.
\]
Note that the coefficients of $T^{[2]}$ and of $T^{[3]}$ are derivates of $\theta_J=X^{[2]}+Y^{[2]}$ as required in the PBI construction. Here $d=2$ (the degree of $\theta_J$) is the formal dimension (socle degree) of the base, and the fiber dimension is $k+1=2$, so degree $F=d+k=3$. Then we have 
\begin{equation}\label{A2eq}
 A={\sf k}[x,y]/J,\quad J=\Ann \theta_J=(xy,x^2-y^2), \quad A\cong \bigl\langle \overline{1},\overline{x},\overline{y},\overline{xy}\bigr\rangle,
\end{equation}
and $H(A)=(1,2,1)$. Also, $C$ is a complete intersection
\begin{equation}\label{C2eq}
C={\sf k}[x,y,t]/I,\quad I=\Ann F=(J, t(t-y)),\quad C\cong_{\sf k}\langle 1;x,y,t;xy,tx,ty;txy\rangle,
\end{equation}
and $H(C)=(1,3,3,1)$. We take $B={\sf k}[t]/(t^2)$. Then, taking $\iota: A\to C$ the natural inclusion, and $\pi:C\twoheadrightarrow B$ induced by $\pi(x)=\pi(y)=0$ the sequence
\[
\xymatrix{{\sf k}\ar[r] & A\ar[r]^-\iota & C\ar[r]^-\pi & B\ar[r] & {\sf k}}
\]
is coexact: the kernel of $\pi$ is $(x,y)C=\iota(A)_+C$. Since $\dim_{\sf k}C=\dim_{\sf k}=8=(\dim_{\sf k}A)\cdot (\dim_{\sf k} B)$, we have by Lemma \ref{coexactfreelem} that $C$ is a free extension of $A$ with fiber $B$ (this freeness is also apparent from Equations \eqref{A2eq} and \eqref{C2eq}). In the PBI language $C$ is a projective fiber bundle over the base $A$ with fiber dimension two.	\end{example}	
	We next give a PBI ``analogue'' of  Example \ref{t2freeextex}. 
	\begin{example}\label{PBI2-ex}
	We wish to have a fiber $B'={\F}[t]/(t^3)$ of Hilbert function $H(BÕ)=(1,1,1)$ so the fiber dimension $k+1=3$; and we take  $A'=R/\Ann F_0$ where $F_0=(X_1^{[2]}X_2-X_1X_2^{[2]})$, the last term of the form $F$ from Example \ref{t2freeextex}.
	So we have $A'=R/\bigl(x_1^2+x_1x_2+x_2^2,\, x_1^3\bigr)$ of Hilbert function $H(A')=(1,2,2,1)$. We let
\begin{equation}
	    FÕ= T^{[2]}(X_1^{[2]}X_2-X_1X_2^{[2]})  + T^{[3]}(X_1^{[2]}-X_2^{[2]})+T^{[4]}(X_1-X_2)
	    \end{equation}
leading to $C'=S/\Ann F$ where $\Ann F=\bigl(x_1^2+x_1x_2+x_2^2,\, t^2(x_1+x_2),\, t(x_1^2-x_2^2)\bigr)$, so $C'$ is a complete intersection of generator degrees $(2,3,3)$ and Hilbert function $H(C')=(1,3,5,5,3,1)$.
Here $\pi: C'\to B'$ takes $(x_1,x_2)$ to $0$.
It is evident that the ideal $(\iota (A')_+)C'\subset C'$ is the kernel of $\pi:C'\to B'$, and $H(C')=H(A')\otimes H(B')= (1,2,2,1) \otimes (1,1,1)$, so $C'$ is by Lemma \ref{coexactfreelem} a free extension of $A'$ with fiber $B'$.
	\end{example}	\vskip 0.2cm\noindent
\subsection{Examples from Invariant Theory with base $A={\sf k}[t]/(t^n)$.}\label{invthsec}
We suppose that a pair of finite groups $K\subset W\subset \Gl(n,\F)$ acts on the polynomial ring $R=\Sym(V^*)\cong \F[x_1,\ldots,x_n]$. Their invariant subrings satisfy $R^W\subset R^K\subset R$; then the quotients $R_W=R/(R^W)_+\cdot R$ and $R^K/(R^W)_+\cdot R^K$ are called the coinvariant and relative coinvariant ring, respectively.  One can show that if $K$ and $W$ are generated by reflections, then the coinvariant ring $C=R_W$ of $W$ is a free extension of the relative coinvariant ring $A=R^K_W$ with fiber $B=R_K$, the coinvariant ring of $K$. For further details we refer the reader to the papers \cite{IMM1} and \cite{McDCIM}.
Recall that $G(1,1,n)$ in the Shephard-Todd classification \cite{ShepTodd} is just the symmetric group ${\mathfrak S}_n$ acting on the ring $R={\sf k}[x_1,\ldots,x_n]$ by permuting the variables; the subgroup $G(1,1,n-1)\subset G(1,1,n)$ permutes the first $n-1$ variables.

We will denote by $e_{i,n}=e_i(x_1,\ldots,x_n)$ the elementary symmetric functions in $x_1,\ldots ,x_n$, and we will denote by $\hat{e}_{i,n}$ the elementary symmetric functions in the $m$-th powers $x_1^m,\ldots, x_n^m$.\par
\begin{example}\label{g11nex}[Groups $W=\mathfrak S_n$ and $K=\mathfrak S_{n-1}$]
Let $R={\F}[x_1,...,x_n]$ be the graded polynomial ring in $n$ variables with the standard grading, and consider the coinvariant rings $C=R_W$ of $\mathfrak S_n$ and $B=R_K$ of $\mathfrak S_{n-1}$.  Then $C=R/I_W, I_W=(e_1,e_2,\ldots, e_n)$, a complete intersection (CI) of generator degrees $\{1,2,\ldots, n\}$ and $B=R/(e_{1,n-1},\ldots, e_{n-1,n-1},x_n)$ is a CI of generator degrees $\{1,1,2,\ldots,n-1\}$. Denote by $\pi\colon C\rightarrow B$ the natural projection map.  Define the polynomial ring $S=R[t]$. Then $A=R^K_W$ satisfies, letting $a_i=e_{i,n-1}$ for $1\le i\le n-1$, and $t=x_n$,
\begin{align}
A&={\frac{{\sf k}[e_{1,n-1},\ldots,e_{n-1,n-1},t]}{(e_{1,n},\ldots, e_{n,n})}}\notag\\
&\cong {\frac{ {\sf k}[a_1,\ldots,a_{n-1},t]}{(a_1+t,a_2+ta_1,\ldots ,a_{n-1}+ta_{n-2},a_{n-1}t)}}\cong {\sf k}[t]/(t^n),\label{chaineq}
\end{align}
of Hilbert function $H(A)=(1,1,\ldots, 1_{n-1})$ of length $n$. The last congruence arises from the sequence
\begin{equation*}
a_i\equiv -a_{i-1}t\equiv {-1}^{i-1}t^{i-1}a_1\mod I_W \text { for $ i\in [2,n-1]$ so } e_{n,n}=a_{n-1}t\equiv t^n\mod I_W.
\end{equation*}
Define the map $\iota\colon A\rightarrow C$ by $\iota(a_i)=e_{i,n-1}$ for $1\le i\le n-1 $ and $\iota (t)=x_n$. The ideal $(\iota(\mathfrak m_A))C=(\iota(t))C=x_nC$ and is the kernel of $\pi: C\to B$, so the sequence $
		\xymatrix{{\sf k}\ar[r] & A\ar[r]^-\iota & C\ar[r]^-\pi & B\ar[r] & {\sf k}}$ is coexact.  Since $|C|=n!=n\cdot (n-1)!=|A|\cdot |B|$, by Lemma \ref{coexactfreelem}(iv) $C$ is a free extension of the base $A$ with fiber $B$.  Let $Q_R={\F}_{DP}[X_1,\ldots,X_n]$ and $Q_S={\F}_{DP}[X_1,\ldots ,X_n,T]$ be the dual rings of $R,S$, respectively and denote by $E_{i,n-1}$ the elementary symmetric functions in $X_1,\ldots ,X_{n-1}$. Then the Macaulay dual generators for $A,B,C$, respectively, are
\begin{align}
F_A= &\,\, T^{n-1}\notag\\
F_B= &\prod_{1\le i<j\le n-1} (X_i-X_j)\notag\\
F_C= & \prod_{1\le i<j\le n} (X_i-X_j)=F_B\cdot\prod_{1\le i\le n-1}(X_i-X_n)\cong F_B\cdot \prod_{1\le i\le n-1}(X_i-T)\notag\\
= &\pm\left( T^{n-1	}F_B+ \sum_{i=1}^{n-1}({-1})^iT^{n-1-i}E_{i,n-1}.\right)\label{FCeqn}
\end{align}
We may ignore the $\pm= (-1)^{n-1}$ sign in front of $F_C$ as we take $F_C$ up to non-zero constant multiple.  So up to sign the forms $G_i$ from \eqref{FGieqn} satisfy $G_i=E_{i,n-1}$ for $1\le i\le n-1$.  Since $\Ann G_i \subset \Ann G_{i-1}$, we have $I_0=\Ann F_B$ while $I_1,\ldots, I_{n-1}=R$. Evidently, the sufficient condition $I_i\circ G_{n-1-i}\subset R\circ F_B$ for $i\in [0,n-1]$ of Equation \eqref{mainthmeqn} of Theorem \ref{thm:Main} is not satisfied. 
\end{example}
\begin{remark} Instead of working with the partition $(n-1,1)$ as in Example \ref{g11nex}, we could work with $(n-2,2)$,
and find similar formulas expanding $F_C$ in terms of a polynomial in $T,U$, with coefficients in
${\sf k}_{DP}[X_1,\ldots,X_{n-2}]$.  This might suggest that there are potential generalizations of
Theorem \ref{thm:Main} to such a context, where the base algebra $A$ satisfies $A\cong {\sf k}(t,u)/J$ where $J$ is a suitable ideal:  here $J=(u^{n-1},t^n)$. 
\end{remark}
The following is an example, suggested by Shujian Chen, of an infinite family of relative coinvariant rings that do not have the strong Lefschetz property, but have non-homogeneous elements of strong Lefschetz Jordan type, and where the relative coinvariant ring has the form $A={\sf k}[t]/(t^n)$. We will see that, again, as in Example \ref{g11nex}, these examples arise from Macaulay dual forms $F$ that do not satisfy
the conditions of Theorem \ref{thm:Main}.
\begin{example}[$W=G(m,p,n)$, $K=G(m,p',n)$, $p|p'$]
\label{ex:Sn} 
Assume  $p \ | \ p'\ |m$ and $p < p'$. We let  $B=R_K$, $K=G(m,p',n)$ and $C=R_W$, $W=G(m,p,n)$ be the coinvariant rings, and denote by $A= R^{G(m,p',n)}_{G(m,p,n)}$, the relative coinvariant ring.  Here, since $R_K, R_W$ have the same embedding dimension, $K$ is a non-parabolic subgroup of $W$. The ring  $A$ can be simplified up to isomorphism as follows:
\begin{align*}
{\F}[x_1,\dots,x_n]_{G(m,p,n)}^{G(m,p',n)}&=
{\F}[\hat{e}_{1,n},\dots,\hat{e}_{n-1,n},e_n(x_1^{(m/p')},\dots, x_n^{(m/p')})]/
\\
&\qquad\qquad\bigl(\hat{e}_{1,n},\dots,\hat{e}_{n-1,n},e_n(x_1^{(m/p)},\dots, x_n^{(m/p)})\bigr)
\\
&\cong {\F}[x_1^{(m/p')}\cdots x_n^{(m/p')}]/(x_1^{(m/p)}\cdots x_n^{(m/p)})
\\
&\cong {\F}[t]/(t^{p'/p}),\quad \text { where } t=(x_1^{(m/p')}\cdots x_n^{(m/p'}).
\end{align*}
Here, $A$ is a non-standard graded algebra of Hilbert function 
\begin{equation}
H(A)=(1,0,\dots 0,1_k,0,\ldots, 1_{2k}, 0,\ldots ,0,1_{(s-1)k}) \text{ where $k=n\cdot (m/p')$},
\end{equation}
of length $s=p'/p$.
 Since $H(A)$ has interior zeroes $A$ has no linear SL element so $A$ is not strong Lefschetz; but the element $t$ has strong Lefschetz Jordan type.  Since $A,B,C$ are complete intersections and $|C|=|A|\cdot |B|$ it follows that $C$ is a free extension of $A$ with fiber $B$. We now specify the dual generator $F_C$ in a few simple cases, to illustrate their pattern.
\vskip 0.2cm\noindent
{\bf Case 1.} Take $W=G(2,1,2)$ and $K=G(2,2,2)$ over $\sf k=\mathbb R$. Then $B=R_K={\mathbb R}[x,y]/(x^2+y^2,xy)$ and $C=R_W={\mathbb R}[x,y]/(x^2+y^2,x^2y^2)$, $A={\mathbb R}[t]/(t^2)$ are complete intersections with $\iota\colon A\to C$, $\iota(t)=xy$, and $\pi:C\to B$ the natural projection. The sequence \eqref{natseqeq} is coexact, and $H(C)=(1,2,2,2,1)$, $H(A)=(1,1)$, $H(B)=(1,2,1)$, so $|C|=|A|\cdot |B|$ and by Lemma \ref{coexactfreelem} the algebra $C$ is a free extension of $A$ with fiber $B$. Here $A$ has dual generator $T$, $B$ has dual generator $F_B=X^{[2]}-Y^{[2]}$ and $C={\mathbb R}[x,y,t]/I_C$, $I_C=(x^2+y^2, t-xy, x^2y^2)$ has dual generator
\begin{equation}
F=TF_B+XYF_B.
\end{equation}
Again, since $G_1$ is a multiple of $G_0$, $\Ann_R G_1=(x^2+y^2, x^2y^2)\subset  \Ann G_0=(x^2+y^2,xy)$, we have $I_1=R$ in Equation \eqref{Ieqn} and the condition \eqref{mainthmeqn} of Theorem~\ref{thm:Main} is not satisfied.\ (We considered this relative coinvariant ring in
\cite[Example~3.2]{McDCIM} but there we did not discuss the dual generator for $I_C$ in $Q_S$.)\vskip 0.2cm\noindent
{\bf Case 2.} Take $W=G(3,1,3)$ and $K=G(3,3,3)$ over a field $\sf k$ of characteristic zero. Then $B=R_K={\sf k}[x,y,z]/I_B$, $I_B=(x^3+y^3+z^3,x^3y^3+x^3z^3+y^3z^3,xyz)$ and $C=R_W={\mathbb R}[x,y,z]/I_C$, $I_C=(x^3+y^3+z^3,x^3y^3+x^3z^3+y^3z^3,(xyz)^3)$, and $A={\mathbb R}[t]/(t^3)$ are complete intersections with $\iota\colon A\to C, \iota(t)=xyz$, and $\pi:C\to B$ the natural projection. Evidently, the sequence \eqref{natseqeq} is coexact, and $H(A)=(1,0,0,1,0,0,1)$, $H(B)=(1,3,6,8,9,9,8,6,3,1)={\frac{(1-t^3)^2(1-t^6)}{(1-t)^3}}$ and $H(C)={\frac{(1-t^3)(1-t^6)(1-t^9)}{(1-t)^3}}$  so $162=|C|=3\cdot 54=|A|\cdot |B|$ and by Lemma \ref{coexactfreelem} the algebra $C$ is a free extension of $A$ with fiber $B$. Here $A$ has dual generator $T^{[2]}$, $B$ has dual generator $F_B=(X^{[3]}-Y^{[3]})(X^{[3]}-Z^{[3]})(Y^{[3]}-Z^{[3]})$ and $C=S/(I_C\cap R, t-xyz)$. The dual generator $F\in Q_S$ for $C$ is
\begin{equation}
F=T^{[2]}F_B+TXYZF_B+(X^{[2]}Y^{[2]}Z^{[2]})F_B.
\end{equation}
Again, this example is outside the aegis of Theorem \ref{thm:Main}.\vskip 0.2cm\noindent
{\bf Case 3.} Take $W=G(4,1,2)$, $K=G(4,2,2)$. Then $B=R_K={\sf k}[x,y]/I_B$ where $ I_B=(x^4+y^4,(xy)^2)$ and $C=R_W={\sf k}[x,y]/I_C$ where $I_C=(x^4+y^4,(xy)^4)$. We take $A={\sf k}[t]/(t^2)$ and define $\iota(t)=(xy)^2$. Their Hilbert functions are $H(A)=(1,0,0,0,1), H(B)=(1,2,3,4,3,2,1)$ and $H(C)=(1,2,3,4,4,4,4,4,3,2,1)$. Here in $Q_R$, we have $F_B=X^{[5]}Y-XY^{[5]}$ and $F_C=(XY)^{[2]}F_B$.
Now, letting $S=R[t]$ we have $C\cong {\sf k}[x,y,t]/(I_C, t-x^2y^2)$ and $C=S/\Ann F$ where
$F=TF_B+X^{[2]}Y^{[2]}F_B$.
\vskip 0.2cm\noindent
{\bf Remark.} These examples suggest that there should be a free extension theorem analogous to our Theorem \ref{thm:Main} relevant to this situation, where the dual generator $F$ as in \eqref{FGieqn} for $C$ satisfies $G_{i-1}$ is a derivate of $G_i$ for $i=1,\ldots, n-1$.
\end{example}
\begin{ack}  The authors would like to thank Shujian Chen, who suggested the Example \ref{ex:Sn}, and Junzo Watanabe and Emre Sen, also the referee for helpful comments. The second author was partially supported by CIMA -- Centro de Investiga\c{c}\~{a}o em Matem\'{a}tica e
Aplica\c{c}\~{o}es, Universidade de \'{E}vora, project
PEst-OE/MAT/UI0117/ 2014 (Funda\c{c}\~{a}o para a Ci\^{e}ncia e
Tecnologia).
\end{ack}

\end{document}